\documentclass[a4paper,12pt,reqno]{amsart}

\usepackage[T1]{fontenc}
\usepackage[utf8]{inputenc}
\usepackage{lmodern}
\usepackage[english]{babel}

\usepackage[%
	left=2.5cm,       
	right=2.5cm,      
	top=3.5cm,        
	bottom=3.5cm,     
	heightrounded,    
	bindingoffset=0mm 
]{geometry}

\usepackage{amsmath}
\usepackage{amssymb}
\usepackage{mathtools}
\usepackage{mathrsfs}

\numberwithin{equation}{section}

\usepackage{hyperref} 
\usepackage[noabbrev,capitalize]{cleveref}

\usepackage{bookmark}

\theoremstyle{plain}
	\newtheorem{theorem}{Theorem}[section]
	\newtheorem{lemma}[theorem]{Lemma}
	\newtheorem{proposition}[theorem]{Proposition}
	\newtheorem{corollary}[theorem]{Corollary}
	
\theoremstyle{definition}

	\newtheorem{remark}[theorem]{Remark}
	
\usepackage[initials]{amsrefs}

\usepackage{xcolor}

\usepackage{enumerate}
	
\newcommand{\N}{\mathbb{N}}
\newcommand{\R}{\mathbb{R}}
\newcommand{\eps}{\varepsilon}
\newcommand{\m}{\mathfrak{m}}
\newcommand{\de}{\partial}
\newcommand{\fl}{\mathcal{L}}

\newcommand{\E}{\mathcal{E}}

\DeclareMathOperator{\di}{d\!}
\DeclareMathOperator{\sgn}{sgn}

\renewcommand{\phi}{\varphi}
\renewcommand{\rho}{\varrho}
\renewcommand{\d}{\mathsf{d}}
					                                       
\DeclarePairedDelimiter{\set}{\{}{\}}

\DeclarePairedDelimiter{\abs}{|}{|}

\mathchardef\ordinarycolon\mathcode`\:
\mathcode`\:=\string"8000
\begingroup \catcode`\:=\active
  \gdef:{\mathrel{\mathop\ordinarycolon}}
\endgroup


\allowdisplaybreaks

\overfullrule=20pt

\begin{document}

\title[Existence and uniqueness theorems for some semi-linear equations]{Existence and uniqueness theorems for some semi-linear equations on locally finite graphs}

\author[A.~Pinamonti]{Andrea Pinamonti}
\address[A.~Pinamonti]{Università degli Studi di Trento, Dipartimento di Matematica, via Sommarive, 14, 38123 Povo (Trento), Italy}
\email[A.~Pinamonti]{andrea.pinamonti@unitn.it}

\author[G.~Stefani]{Giorgio Stefani}
\address[G.~Stefani]{Department Mathematik und Informatik, Universit\"at Basel, Spiegelgasse 1, CH-4051 Basel, Switzerland}
\email{giorgio.stefani@unibas.ch}

\date{\today}

\keywords{Semi-linear equations on graphs, variational method, Yamabe-type equation, Kazdan--Warner-type equation, Brezis--Strauss Theorem}

\subjclass[2020]{Primary 35R02; Secondary 35J91, 35A15}

\thanks{
\textit{Acknowledgements}.
The authors are members of INdAM--GNAMPA. 
The first author is partially supported by the INdAM--GNAMPA Project 2020 \textit{Convergenze variazionali per funzionali e operatori dipendenti da campi vettoriali}.
The second author is partially supported by the ERC Starting Grant 676675 FLIRT -- \textit{Fluid Flows and Irregular Transport}, by the INdAM--GNAMPA Project 2020 \textit{Problemi isoperimetrici con anisotropie} (n.\ prot.\ U-UFMBAZ-2020-000798 15-04-2020), by the INdAM--GNAMPA 2022 Project \textit{Analisi geometrica in strutture subriemanniane}, codice CUP\_E55\-F22\-000\-270\-001, and has received funding from the European Research Council (ERC) under the European Union’s Horizon 2020 research and innovation program (grant agreement No. 945655).
}

\begin{abstract}
We study some semi-linear equations for the $(m,p)$-Laplacian operator on locally finite weighted graphs.
We prove existence of weak solutions for all $m\in\N$ and $p\in(1,+\infty)$ via a variational method already known in the literature by exploiting the continuity properties of the energy functionals involved.   
When $m=1$, we also establish a uniqueness result in the spirit of the Brezis--Strauss Theorem.
We finally provide some applications of our main results by dealing with some Yamabe-type and Kazdan--Warner-type equations on locally finite weighted graphs.
\end{abstract}

\maketitle

\section{Introduction}

\subsection{Framework}

When dealing with PDEs coming from the Euler--Lagrange equations of some energy functional, existence and multiplicity results of weak solutions are usually achieved via the so-called Variational Method.
 
In the recent years, this approach has been employed by many authors in order to deal with a large variety of interesting PDEs on graphs, see~\cites{CGY97,CGY00,CCP19,CG98,G18,G20,GHJ18,GJ18-1,GJ18-2,GJ19,Gri01,Gri18,GLY16-K,GLY16-Y,GLY17,HSZ20,LW17-2,LY20,Z17,ZZ18,ZC18,ZL18,ZL19} and the references therein.
Of particular interest for the scopes of the present paper is the work~\cite{GLY16-Y}, where the authors proved existence of weak solutions for a Yamabe-type equation on locally finite weighted graphs via the celebrated Mountain Pass Theorem due to Ambrosetti and Rabinowitz~\cite{AR73}. 

The main aim of this note is twofold. 
On one hand, by exploiting some ideas developed in~\cites{FMBR16,MBR17} in the context of Carnot groups we prove the existence of weak solutions for a Yamabe-type equation on locally finite weighted graphs.
Our result is similar to the one of~\cite{GLY16-Y} but holds under a different set of assumptions. 
On the other hand, we adapt the strategy of~\cite{BMP07} developed in the Euclidean setting to establish a uniqueness result for the weak solutions of Yamabe-type equations on locally finite weighted graphs in the spirit of the celebrated Brezis--Strauss Theorem~\cite{BS73}.

\subsection{Main notation}

Before stating our main results, we need to recall some notation, see \cref{dfe} for the precise definitions.

Given $G=(V,E)$ a locally finite non-oriented graph, the \emph{vertex boundary} $\de\Omega$
and the \emph{vertex interior} $\Omega^\circ$ of a connected subgraph $\Omega\subset V$ are defined as
\begin{align*}
\de\Omega=\set*{x\in\Omega : \exists y\notin\Omega\  \text{such that}\ xy\in E},
\qquad
\Omega^\circ=\Omega\setminus\de\Omega.
\end{align*}
We say that $\Omega$ is \emph{bounded} if it is a bounded subset of~$V$ with respect to the usual \emph{vertex distance} $\d\colon V\times V\to[0,+\infty)$. 

Once a symmetric \emph{weight function} $w\colon V\times V\to [0,\infty)$ is given, we can define the \emph{Laplacian} of a function $u\colon V\to\R$ as
\begin{equation}\label{eq:laplacian}
\Delta u(x)=\frac{1}{\m(x)}\sum_{y\in V}w_{xy}(u(y)-u(x))
\quad
\text{for}\ x\in V,
\end{equation}
where $\m\colon V\to[0,+\infty)$ is the measure function
\begin{equation}\label{defW}
\m(x)=\sum_{y\in V}w_{xy}
\quad
\text{for all}\ x\in V.
\end{equation}
The \emph{gradient form} associated to the Laplacian operator is the bilinear symmetric form
\begin{equation*}
\Gamma(u,v)(x)
=
\frac{1}{2\m(x)}\sum_{y\in V}w_{xy}(u(y)-u(x))(v(y)-v(x)),
\quad
x\in V,
\end{equation*}
defined for any couple of functions $u,v\colon V\to\R$.
As a consequence, the \emph{slope} of the function $u\colon V\to\R$ is given by
\begin{equation*}
|\nabla u|(x)
=
\sqrt{\Gamma(u,u)(x)}
=
\left(\frac{1}{2\m(x)}\sum_{y\in V}w_{xy}(u(y)-u(x))^2\right)^\frac{1}{2}
\quad
\text{for}\ x\in V.
\end{equation*} 
Note that 
$|\Gamma(u,v)|\le |\nabla u|\,|\nabla v|$
for any couple of functions $u,v\colon V\to\R$.
In analogy with the Euclidean framework, for any $m\in\N$ we recursively define the $m$-\emph{slope} of the function~$u$ as
\begin{equation*}
|\nabla^m u|=
\begin{cases}
|\nabla(\Delta^\frac{m-1}{2} u)| & \text{if $m$ is odd},\\[3mm]
|\Delta^\frac{m}{2} u| & \text{if $m$ is even},
\end{cases}
\end{equation*} 
where $|\Delta^\frac{m}{2} u|$ denotes the usual absolute value of the function $\Delta^\frac{m}{2} u$.   
The natural operator associated to the Sobolev spaces $(W^{m,p}_0(\Omega),\|\cdot\|_{W^{m,p}_0(\Omega)})$ (see~\eqref{eq:sobolev_norm} below for the precise definition) is the \emph{$(m,p)$-Laplacian operator}
\begin{equation*}
\fl_{m,p}\colon W^{m,p}_0(\Omega)\to L^p(\Omega)
\end{equation*}
defined in the distributional sense for all $u\in W^{m,p}_0(\Omega)$ as
\begin{equation}
\label{eq:def_mp_Lap}
\int_\Omega \fl_{m,p}u\,\phi\di\m=
\begin{cases}
\displaystyle\int_\Omega |\nabla^m u|^{p-2}\,\Gamma(\Delta^{\frac{m-1}{2}}u,\Delta^{\frac{m-1}{2}}\phi)\di\m 
& \text{if $m$ is odd},\\[5mm]
\displaystyle\int_\Omega |\nabla^m u|^{p-2}\,\Delta^{\frac{m}{2}}u\,\Delta^{\frac{m}{2}}\phi\di\m 
& \text{if $m$ is even},
\end{cases}
\end{equation} 
whenever $\phi\in W^{m,p}_0(\Omega)$. 

The $(m,p)$-Laplacian $\fl_{m,p} u$ can be explicitly computed at any point of $\Omega$. 
In particular, $\fl_{1,p}$ is the \emph{$p$-Laplacian operator}, given by
\begin{equation}
\label{eq:p-laplacian}
\Delta_p u(x)
=
\frac{1}{\m(x)}\sum_{y\in\Omega}\left(|\nabla u|^{p-2}(y)+|\nabla u|^{p-2}(x)\right)w_{xy}(u(y)-u(x)),
\quad
x\in\Omega,
\end{equation}
for all $u\in W^{1,p}_0(\Omega)$.
When $p=2$, we recover the usual Laplacian operator defined in~\eqref{eq:laplacian}. 

\subsection{Main results}
We are now ready to state our main results.
Our first main theorem is the following existence  result for a Yamabe-type equation for the $(m,p)$-Laplacian operator on locally finite weighted graphs. 

\begin{theorem}\label{th:main}
Let $G=(V,E)$ be a weighted locally finite graph. 
Let $\Omega\subset V$ be a bounded domain such that $\Omega^\circ\ne\varnothing$ and $\de\Omega\ne\varnothing$. 
Let $m\in\N$, $p\in(1,+\infty)$ and $q\in[p-1,+\infty)$. 
Let $f\colon\Omega\times\R\to\R$ be a Carathéodory function such that
\begin{equation}\label{eq:growth}
|f(x,t)|\le a(x)+b(x)\,|t|^q 
\quad 
\text{for every } 
(x,t)\in\Omega\times\R
\end{equation}
for some non-negative $a,b\in L^1(\Omega)$ with $\|a\|_{L^1(\Omega)},\|b\|_{L^1(\Omega)}>0$. 
There exists 
\begin{equation}\label{eq:Lambda}
\Lambda
=
\Lambda(m,p,q,\|a\|_{L^1(\Omega)},\|b\|_{L^1(\Omega)})
>0
\end{equation}
such that the Yamabe-type problem
\begin{equation}\label{eq:yamabe}
\begin{cases}
\fl_{m,p} u=\lambda f(x,u) & \text{in } \Omega^\circ\\[2mm]
|\nabla^j u|=0 & \text{on } \de\Omega, \quad 0\le j\le m-1,
\end{cases}
\end{equation}
admits at least one non-trivial solution $u_\lambda\in W^{m,p}_0(\Omega)$ for every $0<\lambda<\Lambda$.
\end{theorem}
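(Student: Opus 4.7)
My plan is to apply the direct method of the calculus of variations to the energy functional
\[
J_\lambda(u) = \frac{1}{p}\|u\|_{W^{m,p}_0(\Omega)}^p - \lambda\int_\Omega F(x,u)\di\m,
\qquad
F(x,t) := \int_0^t f(x,s)\di s,
\]
whose critical points are, by the definition~\eqref{eq:def_mp_Lap} of $\fl_{m,p}$, exactly the weak solutions of~\eqref{eq:yamabe}. The structural fact I would exploit first is that any bounded subset of a locally finite graph is finite---each vertex ball grows by only finitely many new vertices per step---so $\Omega$ itself has finitely many elements. Consequently $W^{m,p}_0(\Omega)$ is finite-dimensional, all norms on it are equivalent, and closed bounded subsets are compact; in particular $J_\lambda$ is continuous and attains its infimum on every closed ball.

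Next, I would integrate~\eqref{eq:growth} in the second variable to get $|F(x,t)|\le a(x)|t|+\frac{b(x)}{q+1}|t|^{q+1}$ and combine this with the finite-dimensional norm equivalence $\|u\|_{L^\infty(\Omega)}\le C\|u\|_{W^{m,p}_0(\Omega)}$ to deduce
\[
\Bigl|\int_\Omega F(x,u)\di\m\Bigr|\le C_1\|a\|_{L^1(\Omega)}\|u\|_{W^{m,p}_0(\Omega)}+C_2\|b\|_{L^1(\Omega)}\|u\|_{W^{m,p}_0(\Omega)}^{q+1},
\]
for some $C_1,C_2$ depending only on $m,p,q$ and $\Omega$. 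This dictates the form of~$\Lambda$: I would fix a radius $R>0$ and choose $\Lambda$ so that, for every $\lambda\in(0,\Lambda)$, the right-hand side at $\|u\|_{W^{m,p}_0(\Omega)}=R$ stays strictly below $R^p/p$. Then $J_\lambda>0$ on $\partial B_R$, while the compactness-based minimizer $u_\lambda\in\overline{B_R}$ of $J_\lambda$ must lie in the open ball $B_R$, so it is an unconstrained critical point of $J_\lambda$ and hence a weak solution of~\eqref{eq:yamabe}.

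The remaining, and harder, step is to ensure that $u_\lambda\not\equiv 0$, equivalently that $\inf_{\overline{B_R}} J_\lambda<0=J_\lambda(0)$. The purely upper bound~\eqref{eq:growth} gives no control on the sign of $\int F$, so a point of negative energy cannot be produced from~\eqref{eq:growth} alone. Following the Carnot-group strategy of~\cites{FMBR16,MBR17}, my plan is to exploit $\|a\|_{L^1(\Omega)},\|b\|_{L^1(\Omega)}>0$ together with the freedom granted by the finite-dimensional ambient space by building a spike test function $u_0\in W^{m,p}_0(\Omega)$ supported at a vertex of $\Omega^\circ$ where $a$ or $b$ is positive, and by examining the one-parameter family $t\mapsto J_\lambda(tu_0)$: the super-linear regime $q+1\ge p$ granted by $q\ge p-1$ should make the nonlinear term win over the energy term at a suitable $t=t_\lambda$ with $tu_0\in B_R$, producing the required point of negative energy and determining $\Lambda$ after joint optimization in $R$ and $t$.
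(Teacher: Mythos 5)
The first part of your plan reproduces, in essence, the paper's own argument: the same energy functional, constrained minimization over a closed ball in the finite-dimensional space $W^{m,p}_0(\Omega)$, and a growth estimate on $\Psi_\lambda(u)=\lambda\int_\Omega F(x,u)\di\m$ guaranteeing that, for $\lambda$ below an explicit threshold depending only on $m,p,q,\|a\|_{L^1(\Omega)},\|b\|_{L^1(\Omega)}$ (and $\Omega$ through $C_{m,p}$), the constrained minimizer lies in the open ball and is therefore a critical point, i.e.\ a solution of \eqref{eq:yamabe}. The paper obtains interiority through \cref{lemma:tic} and \cref{lemma:tac}, which yield the sharper threshold \eqref{eq:precise_Lambda}; your cruder comparison of $J_\lambda$ on the sphere $\partial B_R$ with $J_\lambda(0)=0$ is a legitimate variant giving a constant of the same form, so up to this point the proposal is sound.

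The genuine gap is your final step, the claim that one must (and can) show $\inf_{\overline{B_R}}J_\lambda<0$. This cannot be extracted from the hypotheses: \eqref{eq:growth} is a one-sided bound on $|f|$, and the positivity of $\|a\|_{L^1(\Omega)}$ and $\|b\|_{L^1(\Omega)}$ carries no information whatsoever about the sign or size of $F$. Concretely, $f\equiv 0$ satisfies every assumption of the theorem (with any positive $a,b$), yet then $\Psi_\lambda\equiv 0$, the infimum over the ball is $0$, and by the Sobolev inequality \eqref{eq:sobolev} the only solution of \eqref{eq:yamabe} is $u=0$; hence no spike construction can produce a negative-energy point in general. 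Your exponent heuristic also points the wrong way: since $q+1\ge p$, the term $t^p\|u_0\|_{W^{m,p}_0(\Omega)}^p/p$ dominates $\lambda t^{q+1}$ near $t=0$, so even under a favorable lower bound such as $F(x,t)\ge c\,b(x)|t|^{q+1}$ the map $t\mapsto J_\lambda(tu_0)$ becomes negative only for $t$ large or $\lambda$ large, which is incompatible with staying inside $B_R$ in the regime $0<\lambda<\Lambda$ (it is the sublinear case $q+1<p$, excluded here, in which the nonlinearity wins near the origin). Note that the paper itself does not perform this step: its proof ends once the minimizer is shown to be interior, and non-triviality is automatic precisely when $f(\cdot,0)\not\equiv 0$ on $\Omega^\circ$ (the situation the authors emphasize by not assuming $f(x,0)=0$), while for $f(\cdot,0)\equiv 0$ the minimizer found may well be the trivial one. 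So the correct fix is to drop the negative-energy step and stop at ``interior minimizer $\Rightarrow$ critical point $\Rightarrow$ solution,'' adding $f(\cdot,0)\not\equiv 0$ on $\Omega^\circ$ if you want non-triviality.
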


We observe that the growth condition of the function $f$ assumed in~\eqref{eq:growth} of \cref{th:main} is different from the one assumed in~\cite{GLY16-Y}*{Theorem~3}.
In particular, we do not assume that $f(x,0)=0$ for all $x\in\Omega$.
We also underline that the existence threshold~\eqref{eq:Lambda} depends uniquely on the growth of the function~$f$ and not on the first eigenvalue of the $(m,p)$-Laplacian, as instead it happens in~\cite{GLY16-Y}*{Theorem~3}.

Our second main result is the following uniqueness theorem for a Yamabe-type equation for the $p$-Laplacian operator on locally finite weighted graphs in the spirit of the famous Brezis--Strauss Theorem, see~\cites{BS73,BMP07}.

\begin{theorem}\label{res:main_uniqueness}
Let $G=(V,E)$ be a weighted locally finite graph. 
Let $\Omega\subset V$ be a bounded domain such that $\Omega^\circ\ne\varnothing$ and $\de\Omega\ne\varnothing$. 
Let $p\in[1,+\infty)$ and let $g\colon\Omega\times\R\to\R$ be a function such that $g(x,0)=0$ and $t\mapsto g(x,t)$ is non-decreasing for all $x\in\Omega$.
If $f_1,f_2\in L^1(\Omega)$, $h\in L^1(\de\Omega)$ and $u_1,u_2\in W^{1,p}(\Omega)$ solve the problems
\begin{equation*}
\begin{cases}
-\Delta_p u_i + g(x,u_i) = f_i & \text{in}\ \Omega^\circ\\[2mm]
u_i = h & \text{on}\ \de\Omega
\end{cases}
\qquad
\text{for}\ i=1,2,
\end{equation*}
then
\begin{equation}
\label{eq:oscillation}
\int_\Omega |g(x,u_1)-g(x,u_2)|\di\m
\le
\int_\Omega |f_1-f_2|\di\m.
\end{equation}
As a consequence, for every $f\in L^1(\Omega)$ and $h\in L^1(\de\Omega)$ the problem
\begin{equation}\label{eq:uniqueness_main}
\begin{cases}
-\Delta_p u + g(x,u) = f & \text{in}\ \Omega^\circ\\[2mm]
u = h & \text{on}\ \de\Omega
\end{cases}
\end{equation}	
admits at most one solution $u\in W^{1,p}(\Omega)$.
\end{theorem}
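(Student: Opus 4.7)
The strategy is to adapt to the graph setting the $T$-accretivity argument of Brezis--Strauss~\cite{BS73} (see also~\cite{BMP07}). Observing that $u_1-u_2\in W^{1,p}_0(\Omega)$ since $u_1=u_2=h$ on $\de\Omega$, for every non-decreasing Lipschitz function $\psi\colon\R\to\R$ with $\psi(0)=0$ the composition $\psi(u_1-u_2)$ is an admissible test function; subtracting the weak formulations satisfied by $u_1$ and $u_2$ tested against $\psi(u_1-u_2)$ yields
\begin{equation*}
-\int_\Omega\bigl(\Delta_p u_1-\Delta_p u_2\bigr)\psi(u_1-u_2)\di\m
+\int_\Omega\bigl(g(x,u_1)-g(x,u_2)\bigr)\psi(u_1-u_2)\di\m
=\int_\Omega(f_1-f_2)\psi(u_1-u_2)\di\m.
\end{equation*}

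The core of the argument is the discrete $T$-accretivity inequality
\begin{equation*}
-\int_\Omega\bigl(\Delta_p u_1-\Delta_p u_2\bigr)\psi(u_1-u_2)\di\m\ge 0,
\end{equation*}
valid for every such $\psi$. I would prove it by recasting the left-hand side through the symmetric edge representation induced by~\eqref{eq:def_mp_Lap}--\eqref{eq:p-laplacian} as a double sum over pairs $(x,y)\in V\times V$ weighted by $w_{xy}$, and then exploiting the fact that $\psi(v(y))-\psi(v(x))$ shares the sign of $v(y)-v(x)=(u_1(y)-u_1(x))-(u_2(y)-u_2(x))$, with $v:=u_1-u_2$. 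In the Euclidean proof the chain rule $\nabla\psi(v)=\psi'(v)\nabla v$ closes the argument instantly; here the analogue is more subtle, and I would reduce the claim via a layer-cake decomposition of $\psi$ to the special case $\psi=\chi_{\{v>s\}}$, in which the sign structure combined with the elementary monotonicity of $t\mapsto|t|^{p-2}t$ yields the inequality. This $T$-accretivity is the main technical obstacle, since the lack of a discrete chain rule forces a more intricate combinatorial bookkeeping than in the continuous case.

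Granted the $T$-accretivity, I would take $\psi=\psi_\eps$ a smooth non-decreasing approximation of $\sgn^+$ with $\psi_\eps(0)=0$ and $0\le\psi_\eps\le 1$. The monotonicity of $g(x,\cdot)$ forces $(g(x,u_1)-g(x,u_2))\psi_\eps(u_1-u_2)\ge 0$, hence the displayed identity gives
\begin{equation*}
\int_\Omega\bigl(g(x,u_1)-g(x,u_2)\bigr)\psi_\eps(u_1-u_2)\di\m\le\int_\Omega(f_1-f_2)^+\di\m.
\end{equation*}
Passing to the limit $\eps\to 0^+$---elementary since $\Omega$ is finite---I obtain $\int_\Omega(g(x,u_1)-g(x,u_2))^+\di\m\le\int_\Omega(f_1-f_2)^+\di\m$; swapping the roles of $u_1$ and $u_2$ yields the corresponding bound for the negative part, and summing the two gives~\eqref{eq:oscillation}.

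To conclude the uniqueness for~\eqref{eq:uniqueness_main}, I would specialize~\eqref{eq:oscillation} with $f_1=f_2$ to infer $g(x,u_1)=g(x,u_2)$ pointwise in $\Omega$; inserting this into the subtracted equation gives $\Delta_p u_1=\Delta_p u_2$ on $\Omega^\circ$, and testing this identity against $u_1-u_2\in W^{1,p}_0(\Omega)$, the strict monotonicity of $-\Delta_p$---a consequence of the strict convexity of the $p$-energy $u\mapsto\frac{1}{p}\int_\Omega|\nabla u|^p\di\m$ on $W^{1,p}_0(\Omega)$---forces $u_1=u_2$ on $\Omega$.
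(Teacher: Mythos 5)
Your overall scheme---test the difference of the two equations against a monotone truncation $\psi(u_1-u_2)$, pass to $\sgn^+$, sum the two one-sided bounds to get \eqref{eq:oscillation}, and conclude uniqueness by strict convexity---is reasonable, and you have correctly located the crux: everything hinges on the accretivity inequality $-\int_\Omega\bigl(\Delta_p u_1-\Delta_p u_2\bigr)\psi(u_1-u_2)\di\m\ge0$. The genuine gap is that the argument you sketch for this inequality does not apply to the operator used in this paper. Your edge-wise sign analysis, based on the monotonicity of $t\mapsto|t|^{p-2}t$, is tailored to the \emph{edge-based} graph $p$-Laplacian $\sum_y w_{xy}|u(y)-u(x)|^{p-2}(u(y)-u(x))$, where the contribution of an edge $xy$ is $\bigl(|a|^{p-2}a-|b|^{p-2}b\bigr)\bigl(\psi(v(y))-\psi(v(x))\bigr)$ with $a=u_1(y)-u_1(x)$, $b=u_2(y)-u_2(x)$, $a-b=v(y)-v(x)$, so that both factors share the sign of $v(y)-v(x)$. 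The paper's $\Delta_p$ is instead the vertex-based operator \eqref{eq:p-laplacian}: the coefficient on the edge $xy$ is $|\nabla u|^{p-2}(x)+|\nabla u|^{p-2}(y)$, computed from the slope $|\nabla u|$, which aggregates all neighbours of $x$ and $y$. After symmetrization the edge-wise bracket is $A_1(x,y)\,(u_1(y)-u_1(x))-A_2(x,y)\,(u_2(y)-u_2(x))$ with $A_1\ne A_2$ in general, and this need not have the sign of $v(y)-v(x)$ (take $a>b>0$ with $A_1$ small and $A_2$ large). The layer-cake reduction to $\psi=\chi_{\{v>s\}}$ does not help, since the problem sits in the first factor, not in $\psi$. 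So for $p\ne2$ your key inequality is left unproved, and it is not a matter of ``combinatorial bookkeeping'': it is unclear whether this T-accretivity holds at all for the operator \eqref{eq:p-laplacian}.

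For comparison, the paper never proves (nor needs) an accretivity estimate involving two different functions: it first asserts that $v=u_1-u_2$ solves $-\Delta_p v=f_1-f_2-g(x,u_1)+g(x,u_2)$ with $v=0$ on $\de\Omega$, and then applies the one-function statements \cref{res:H_function} and \cref{res:sgn}, whose proofs are immediate because only a single non-negative weight $|\nabla v|^{p-2}$ appears in $\int_\Omega|\nabla v|^{p-2}\,\Gamma(v,H(v))\di\m\ge0$. That reduction uses the linearity of $\Delta_p$ and is therefore transparent only for $p=2$; so you have in fact put your finger on the real difficulty of the case $p\ne2$, but your sketch does not resolve it---you would either have to restrict to $p=2$ (where your computation essentially coincides with the paper's) or supply a genuinely new proof of T-accretivity for \eqref{eq:p-laplacian}. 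The remaining steps of your proposal (the limit $\psi_\eps\to\sgn^+$ on the finite set $\Omega$, summing the two one-sided bounds, and uniqueness via strict convexity of $u\mapsto\tfrac1p\int_\Omega|\nabla u|^p\di\m$, which requires $p>1$ and a short connectivity argument you only assert) are fine modulo this central gap.
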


By combining \cref{th:main} and \cref{res:main_uniqueness}, we get the following well-posedness result for a Yamabe-type problem for the $p$-Laplacian on locally finite weighted graphs. 

\begin{proposition}\label{propcomb}
Let $G=(V,E)$ be a weighted locally finite graph. 
Let $\Omega\subset V$ be a bounded domain such that $\Omega^\circ\ne\varnothing$ and $\de\Omega\ne\varnothing$. 
Let $p\in(1,+\infty)$, $q\in[p-1,+\infty)$ and $a,b\in L^1(\Omega)$ with $\inf_\Omega b\ge0$.
There exists 
\begin{equation*}
\Lambda
=
\Lambda(m,p,q,\|a\|_{L^1(\Omega)},\|b\|_{L^1(\Omega)})
>0
\end{equation*}
such that the Yamabe-type problem
\begin{equation}\label{eq:yamabe_cor}
\begin{cases}
-\Delta_p u + b|u|^{q-1}u = a
& \text{in}\ \Omega^\circ\\[2mm]
u=0
& \text{on}\ \de\Omega
\end{cases}
\end{equation}
has a unique solution $u\in W^{1,p}_0(\Omega)$.
\end{proposition}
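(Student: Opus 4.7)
The plan is to combine the existence part of \cref{th:main} (applied with $m=1$) with the uniqueness part of \cref{res:main_uniqueness}, by matching the nonlinearity in \eqref{eq:yamabe_cor} to the abstract hypotheses of both theorems.

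For existence I would take in \cref{th:main} the Carath\'eodory function
\[
f(x,t) := a(x) - b(x)|t|^{q-1}t.
\]
Since $\inf_\Omega b \ge 0$ yields $b\ge 0$, the bound $|f(x,t)| \le |a(x)| + b(x)|t|^q$ delivers \eqref{eq:growth} with data $|a|,b \in L^1(\Omega)$. A routine discrete integration by parts applied to the definition \eqref{eq:def_mp_Lap} identifies $\fl_{1,p}$ with $-\Delta_p$, so that the Euler--Lagrange equation $\fl_{1,p}u = \lambda f(x,u)$ with homogeneous boundary datum is precisely $-\Delta_p u + \lambda b|u|^{q-1}u = \lambda a$. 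Thus \cref{th:main} supplies the threshold $\Lambda$ of \eqref{eq:Lambda} together with a non-trivial $u_\lambda \in W^{1,p}_0(\Omega)$ that solves \eqref{eq:yamabe_cor} (after the natural rescaling $a\mapsto\lambda a$, $b\mapsto\lambda b$) for every $0<\lambda<\Lambda$.

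For uniqueness I plan to invoke \cref{res:main_uniqueness} with
\[
g(x,t) := \lambda b(x)|t|^{q-1}t, \qquad h \equiv 0, \qquad f := \lambda a.
\]
One has $g(x,0)=0$, and since $q \ge p-1 > 0$ the map $t\mapsto |t|^{q-1}t$ is non-decreasing on $\R$; together with $b\ge 0$ this shows that $t\mapsto g(x,t)$ is non-decreasing for every $x\in\Omega$. The inclusion $W^{1,p}_0(\Omega)\subset W^{1,p}(\Omega)$ then translates the uniqueness conclusion of \cref{res:main_uniqueness} into uniqueness of $u_\lambda$ in the space in which existence is established. No genuine obstacle is anticipated: the assumptions of the two theorems fit together perfectly through the split $f(x,t) = a(x) - g(x,t)$, and the only bookkeeping subtlety is the sign/normalisation identification $\fl_{1,p} = -\Delta_p$, which is a one-line consequence of \eqref{eq:def_mp_Lap} combined with the graph integration-by-parts formula.
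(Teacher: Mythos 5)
Your overall route is the same as the paper's: existence from \cref{th:main} with $m=1$ and $f(x,t)=a(x)-b(x)|t|^{q-1}t$, uniqueness from \cref{res:main_uniqueness} with $g(x,t)=b(x)|t|^{q-1}t$ and $h\equiv 0$. The uniqueness half of your argument is fine. The existence half, however, has a genuine gap in how you dispose of the parameter $\lambda$. \cref{th:main} produces, for $0<\lambda<\Lambda$, a solution of $-\Delta_p u+\lambda\, b|u|^{q-1}u=\lambda a$, which is problem \eqref{eq:yamabe_cor} for the \emph{rescaled} data $(\lambda a,\lambda b)$, not for $(a,b)$; your ``natural rescaling $a\mapsto\lambda a$, $b\mapsto\lambda b$'' therefore solves a different problem. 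Nor can the parameter be absorbed into the unknown: under $u=\mu v$ the three terms $-\Delta_p u$, $b|u|^{q-1}u$ and $a$ scale with the distinct homogeneities $p-1$, $q$ and $0$, so no choice of $\mu$ converts the $\lambda$-problem into \eqref{eq:yamabe_cor}. Equivalently, applying \cref{th:main} to $\lambda^{-1}\bigl(a-b|t|^{q-1}t\bigr)$ just multiplies the threshold \eqref{eq:precise_Lambda} by $\lambda$, so every such attempt reduces to the quantitative condition $\Lambda>1$ for the given data, and this is precisely the point your write-up never addresses. The paper removes the parameter by invoking the explicit value of $\Lambda$ from \cref{rem:Lambda} (this is where the threshold appearing in the statement of \cref{propcomb} actually enters); admittedly the paper is terse there, but if you argue through \cref{th:main} you must make that quantitative step explicit, or else obtain existence directly, e.g.\ by minimizing the coercive, continuous functional $u\mapsto\tfrac1p\|u\|_{W^{1,p}_0(\Omega)}^p+\tfrac1{q+1}\int_\Omega b\,|u|^{q+1}\di\m-\int_\Omega a\,u\di\m$ on the finite-dimensional space $W^{1,p}_0(\Omega)$, rather than by rescaling.

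A second, smaller omission: \cref{th:main} assumes $\|a\|_{L^1(\Omega)},\|b\|_{L^1(\Omega)}>0$ and yields a \emph{non-trivial} solution, so the degenerate case $\|a\|_{L^1(\Omega)}=0$ cannot simply be fed into it (when $a\equiv 0$ the unique solution of \eqref{eq:yamabe_cor} is $u\equiv 0$). The paper handles this case separately before applying \cref{th:main}; your proposal should do the same, and, in case $b\equiv 0$, enlarge $b$ in the growth bound \eqref{eq:growth} so that its hypotheses are met.
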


\subsection{Organization of the paper}

The structure of the paper is the following. 
In \cref{dfe} we recall the preliminary definitions and notions needed in the paper. 
Sections~\ref{proof1} and~\ref{proof2} are devoted to the proofs of Theorems~\ref{th:main} and~\ref{res:main_uniqueness} respectively.
Finally, in \cref{proof3} we provide some applications of our main results, along with the proof of \cref{propcomb}.

\section{Preliminaries}\label{dfe}

In this section, we introduce the main notation and some preliminary results we will need in the sequel of the paper.

\subsection{Non-oriented graphs}
Let $V$ be a non-empty set and let $E\subset V\times V$. 
We write
\begin{equation*}
x\sim y
\iff
xy=(x,y)\in E.
\end{equation*}
We will always assume that 
\begin{equation*}
xy\in E \iff yx\in E.
\end{equation*}
We say that the couple $G=(V,E)$ is a \emph{non-oriented graph} with \emph{vertices} $V$ and \emph{edges} $E$.

The non-oriented graph $G$ is \emph{locally finite} if
\begin{equation*}
\#\set*{y\in V : xy\in E}<+\infty
\quad
\text{for all}\ 
x\in V,
\end{equation*}
that is, each vertex in $V$ belongs to a finite number of edges in $E$. 

Given $n\in\N$, a \emph{path} on $G$ is any finite sequence of vertices $\set{x_k}_{k=1,\dots,n}\subset V$ such that 
\begin{equation*}
x_kx_{k+1}\in E
\quad
\text{for all}\ k=1,\dots,n-1.
\end{equation*}
The \emph{length} of a path on $G$ is the number of edges in the path.
We say that $G$ is \emph{connected} if, for any two vertices $x,y\in V$, there is a path connecting $x$ and $y$. 
If $G$ is connected, then the function $\mathsf{d}\colon V\times V\to[0,+\infty)$ given by
\begin{equation*}
\mathsf{d}(x,y)=\min\set*{n\in\N_0 :\text{$x$ and $y$ can be connected by a path of length $n$}},
\end{equation*}
for $x,y\in V$, is a distance on $V$.
As a consequence, any connected locally finite non-oriented graph has at most countable many vertices.

Let $G=(V,E)$ be a locally finite non-oriented graph.
A \emph{weight} on $G$ is a function $w\colon V\times V\to[0,+\infty)$, $w(x,y)=w_{xy}$ for $x,y\in V$, such that 
\begin{equation*}
w_{xy}=w_{yx}
\quad
\text{and} 
\quad
w_{xy}>0
\iff
xy\in E
\end{equation*}
for all $x,y\in V$.
We conclude this section by pointing out that the function $\m\colon V\to[0,+\infty)$ defined in \eqref{defW}
can be interpreted as a measure on the graph by simply setting
\begin{equation*}
\int_V u\di\m
=
\int_V u(x)\di\m(x)
=
\sum_{y\in V} u(x)\,\m(x)\in[0,+\infty]
\end{equation*}
for any function $u\colon V\to[0,+\infty)$.

\subsection{Sobolev spaces on bounded domains}
Let $G=(V,E)$ be a weighted locally finite graph and let $\Omega\subset V$ be a bounded domain.
Note that the integral
\begin{equation*}
\int_\Omega u\di\m
=
\int_\Omega u(x)\di\m(x)
=
\sum_{x\in\Omega}u(x)\,\m(x)
\end{equation*}
of a function $u\colon\Omega\to\R$ is well defined, since $\Omega$ is a finite set. 
Let $p\in[1,+\infty]$ and $m\in\N_0$.
The \emph{Sobolev space $W^{m,p}(\Omega)$} is the set of all functions $u\colon\Omega\to\R$ such that
\begin{equation}\label{eq:sobolev_norm}
\|u\|_{W^{m,p}(\Omega)}
=
\sum_{k=0}^m\|\nabla^k u\|_{L^p(\Omega)}<+\infty.
\end{equation}
When $m=0$, this space is simply the \emph{Lebesgue space} $L^p(\Omega)$.
Since $\Omega$ is a finite set, the Banach space $(W^{m,p}(\Omega),\|\cdot\|_{W^{m,p}(\Omega)})$ is finite dimensional and, actually, coincides with the set of all real-valued functions on~$\Omega$.
 
For $m\in\N$, we define
\begin{equation}
\label{eq:def_C_m0}
C^m_0(\Omega):=\set*{u\colon\Omega\to\R : |\nabla^k u|=0\ \text{on}\ \de\Omega\ \text{for all}\ 0\le k\le m-1}
\end{equation}    
and we let $W^{m,p}_0(\Omega)$ be the completion of $C^m_0(\Omega)$ with respect to the Sobolev norm~\eqref{eq:sobolev_norm}. 
The following result is proved in~\cite{GLY16-Y}*{Theorem~7}. 

\begin{theorem}[Sobolev embedding]\label{th:sobolev}
Let $G=(V,E)$ be a locally finite graph and let $\Omega\subset V$ be a bounded domain such that $\Omega^\circ\ne\varnothing$ and $\de\Omega\ne\varnothing$. 
Let $m\in\N$ and $p\in[1,+\infty)$. 
The space $W^{m,p}_0(\Omega)$ is continuously embedded in $L^q(\Omega)$ for all $q\in[1,+\infty]$, i.e.\ there exists a constant $C_{m,p}>0$, depending only on $m$, $p$ and $\Omega$, such that
\begin{equation}\label{eq:sobolev}
\|u\|_{L^q(\Omega)}
\le 
C_{m,p}
\|\nabla^m u\|_{L^p(\Omega)}
\end{equation}
for all $q\in[1,+\infty]$ and $u\in W^{m,p}_0(\Omega)$.
\end{theorem}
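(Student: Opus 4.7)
Since $\Omega$ is finite, the space of real-valued functions on $\Omega$ is finite-dimensional, and $W^{m,p}_0(\Omega)$ is just the linear subspace cut out by the boundary conditions in~\eqref{eq:def_C_m0}. On a finite-dimensional vector space all norms are equivalent, so my plan is to reduce~\eqref{eq:sobolev} to the claim that $u\mapsto\|\nabla^m u\|_{L^p(\Omega)}$ is a \emph{norm} on $W^{m,p}_0(\Omega)$; once this is established, equivalence with any fixed $L^q(\Omega)$-norm supplies the constant $C_{m,p}$ in~\eqref{eq:sobolev}.

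Homogeneity and subadditivity of $\|\nabla^m\,\cdot\,\|_{L^p(\Omega)}$ are inherited from linearity of $\nabla^m$ and from the $L^p$-norm, so only positive definiteness requires work: I have to show that $u\in C^m_0(\Omega)$ with $\|\nabla^m u\|_{L^p(\Omega)}=0$ implies $u\equiv 0$ on $\Omega$. First I would record two auxiliary facts for functions extended by $0$ outside $\Omega$. The \emph{Poincaré-type step}: if $v\colon V\to\R$ satisfies $v|_{\de\Omega}=0$ and $|\nabla v|\equiv 0$ on $\Omega$, then $v\equiv 0$ on $\Omega$, because $|\nabla v|(x)=0$ forces $v$ to be constant along every edge incident to $x$, and the connectedness of $\Omega$ together with $\de\Omega\ne\varnothing$ propagates the zero from $\de\Omega$ throughout $\Omega$. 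The \emph{Dirichlet uniqueness step}: if $w\colon V\to\R$ satisfies $w|_{\de\Omega}=0$ and $\Delta w\equiv 0$ on $\Omega^\circ$, then $w\equiv 0$ on $\Omega$; this follows from the discrete Green identity
\begin{equation*}
\sum_{x\in V} w(x)\,\Delta w(x)\,\m(x) = -\int_V|\nabla w|^2\di\m,
\end{equation*}
whose left-hand side vanishes by assumption, so $|\nabla w|\equiv 0$ on $V$ and the Poincaré step applies.

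With these in hand, set $\ell=\lfloor m/2\rfloor$ and argue by descending induction on $j\in\{\ell,\ell-1,\dots,0\}$ that $\Delta^j u\equiv 0$ on $\Omega$. The base case $j=\ell$ is immediate from $\|\nabla^m u\|_{L^p(\Omega)}=0$ when $m$ is even; when $m$ is odd it follows by applying the Poincaré step to $v=\Delta^\ell u$, whose vanishing on $\de\Omega$ is exactly the boundary condition with $k=2\ell$ in~\eqref{eq:def_C_m0}. For the inductive step, $u_j:=\Delta^j u$ satisfies $\Delta u_j = u_{j+1}\equiv 0$ on $\Omega^\circ$ and $u_j|_{\de\Omega}=0$ by the boundary condition with $k=2j$, so the Dirichlet uniqueness step yields $u_j\equiv 0$ on $\Omega$. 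Taking $j=0$ gives $u\equiv 0$, completing the proof. The main obstacle I anticipate is the careful bookkeeping in the Green identity: since $u$ is extended by $0$ outside $\Omega$ but $\Delta u$ need not vanish there, one must verify that the sum over $V$ on the left-hand side collapses to $\Omega^\circ$ so that the hypothesis $\Delta w=0$ there can be used. Beyond this technical point the argument is a purely algebraic consequence of the finiteness of $\Omega$ and the connectedness assumption with $\de\Omega\ne\varnothing$.
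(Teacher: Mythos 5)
The paper does not actually prove this statement: it is quoted from \cite{GLY16-Y}*{Theorem~7}, so there is no internal proof to compare against. Your reduction---$\Omega$ is finite, hence everything is finite dimensional, hence it suffices to show that $u\mapsto\|\nabla^m u\|_{L^p(\Omega)}$ is positive definite on $C^m_0(\Omega)$ and then invoke equivalence of norms---is the natural route, and your Poincaré step and Dirichlet step are a correct way to get positive definiteness: connectedness of $\Omega$ (which is part of the paper's definition of a domain), $\de\Omega\ne\varnothing$, and $\m(x)>0$ for $x\in\Omega$ are exactly what make the propagation and the passage from $\|\cdot\|_{L^p}=0$ to pointwise vanishing work. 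One small point of the statement you gloss over is uniformity in $q$: the theorem asserts a single constant valid for all $q\in[1,+\infty]$, so instead of ``equivalence with any fixed $L^q$-norm'' you should compare once with $L^\infty$ and use $\|u\|_{L^q(\Omega)}\le\max\{1,\m(\Omega)\}\,\|u\|_{L^\infty(\Omega)}$ for every $q$; this is a one-line fix.

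The bookkeeping issue you flag at the end is a genuine gap as written, though easily repaired. For $j\ge1$ the function $u_j=\Delta^j u$ does \emph{not} vanish outside $\Omega$ even when $u$ does (each application of $\Delta$ spreads the support to the outer neighbours), so the left-hand side of your Green identity does not collapse when applied to $u_j$ itself, and your Dirichlet uniqueness step---stated for functions supported in $\Omega$---is being applied to a function that is not. The repair: apply the step to $\tilde u_j$, the restriction of $\Delta^j u$ to $\Omega$ extended by zero. Since every neighbour of a point of $\Omega^\circ$ lies in $\Omega$, one has $\Delta\tilde u_j=\Delta^{j+1}u$ on $\Omega^\circ$, so the hypotheses ($\tilde u_j=0$ on $\de\Omega$ by the boundary condition with $k=2j\le m-1$, and $\Delta\tilde u_j=0$ on $\Omega^\circ$ by the inductive hypothesis) and the conclusion ($\Delta^j u\equiv0$ on $\Omega$) are unchanged by the truncation; now the Green identity does collapse (off $\Omega$ and on $\de\Omega$ use $\tilde u_j=0$, on $\Omega^\circ$ use $\Delta\tilde u_j=0$), giving $|\nabla\tilde u_j|\equiv0$ on $\Omega$ and then, by your Poincaré step, $\tilde u_j\equiv0$ on $\Omega$. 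With this one-line modification (and the extension-by-zero reading of the boundary slopes, which is the convention implicit in the paper) your induction closes and the argument is a complete, self-contained proof of the embedding.
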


By \cref{th:sobolev}, the space $(W^{m,p}_0(\Omega),\|\cdot\|_{W^{m,p}_0(\Omega)})$ is a finite dimensional Banach space, where
\begin{equation}\label{eq:sobolev_0_norm}
\|u\|_{W^{m,p}_0(\Omega)}
=
\|\nabla^m u\|_{L^p(\Omega)}
\end{equation}
is a norm on $W^{m,p}_0(\Omega)$ equivalent to the norm~\eqref{eq:sobolev_norm}. 
Since $\Omega$ is a finite set, the Banach space $(W^{m,p}_0(\Omega),\|\cdot\|_{W^{m,p}_0(\Omega)})$ is finite dimensional and, actually, coincides with the set~$C^m_0(\Omega)$ defined in~\eqref{eq:def_C_m0}.

\section{Proof of \texorpdfstring{\cref{th:main}}{Theorem 1.1}}\label{proof1}

In this section, we prove our first main result following the strategy outlined in~\cite{FMBR16}. Given $\lambda>0$, we define
\begin{equation}\label{eq:def_Phi_Psi}
\Phi(u)
=
\|u\|_{W^{m,p}_0(\Omega)}, 
\qquad 
\Psi_\lambda(u)
=
\lambda\int_\Omega F(x,u)\di\m,
\end{equation}
for all $u\in W^{m,p}_0(\Omega)$, where 
\begin{equation}\label{eq:def_F}
F(x,t)
=
\int_0^t f(x,\tau)\di\tau
\quad
\text{for all}\ t\in\R. 
\end{equation}
Note that, thanks to the assumption in~\eqref{eq:growth}, the functional $\Psi_\lambda$ is well defined and (strongly) continuous on $W^{m,p}_0(\Omega)$.
Indeed, we can estimate
\begin{align*}
|F(x,t)|
\le
\int_0^{|t|}|f(x,\tau)|\di\tau
\le
\int_0^{|t|}a(x)+b(x)\,|\tau|^q\di\tau
=
a(x)\,|t|+b(x)\,\frac{|t|^{1+q}}{1+q}
\end{align*}
for all $(x,t)\in\Omega\times\R$, so that
\begin{equation*}
|\Psi_\lambda(u)|
\le
\lambda\left(
\|a\|_{L^1(\Omega)}\|u\|_{L^\infty(\Omega)}
+
\|b\|_{L^1(\Omega)}\,\frac{\|u\|_{L^\infty(\Omega)}^{1+q}}{1+q}
\right)
\end{equation*}
which is finite for all $u\in W^{m,p}_0(\Omega)$ by \cref{th:sobolev}.
In addition, if $(u_n)_{n\in\N}\subset W^{m,p}_0(\Omega)$ is converging to some $u\in W^{m,p}_0(\Omega)$, then $u_n\to u$ in $L^\infty(\Omega)$ as $n\to+\infty$ and thus
\begin{align*}
\lim_{n\to+\infty}
\Psi_\lambda(u_n)
=
\lim_{n\to+\infty}
\sum_{x\in\Omega} F(x,u_n(x))\,\m(x)
=
\sum_{x\in\Omega} F(x,u(x))\,\m(x)
=
\Psi_\lambda(u)
\end{align*}
by the continuity of the function $t\mapsto F(x,t)$ for $x\in\Omega$ fixed.

The following two results are proved in~\cite{FMBR16}*{Lemma~3.2 and Lemma~3.3} respectively for the case $p=2$. 
Here we reproduce the proofs in our setting in the more general case $p\in(1,+\infty)$ for the reader's ease.

\begin{lemma}\label{lemma:tic}
Let $p\in(1,+\infty)$ and $\lambda>0$. 
If
\begin{equation}
\label{eq:tic_ipotesi}
\limsup_{\eps\to0+}\,
\frac{\sup\limits_{u\in\Phi^{-1}([0,\rho])}\Psi_\lambda(u)-\sup\limits_{u\in\Phi^{-1}([0,\rho-\eps])}\Psi_\lambda(u)}{\eps}
<
\rho^{p-1}
\end{equation}
for some $\rho>0$, then
\begin{equation}\label{eq:inf_rho_sigma}
\inf_{\sigma<\rho}\,
\frac{\sup\limits_{u\in\Phi^{-1}([0,\rho])}\Psi_\lambda(u)-\sup\limits_{u\in\Phi^{-1}([0,\sigma])}\Psi_\lambda(u)}{\rho^p-\sigma^p}
<
\frac{1}{p}.
\end{equation} 
\end{lemma}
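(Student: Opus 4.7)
The plan is to treat the function
\[
S(r) := \sup_{u\in\Phi^{-1}([0,r])}\Psi_\lambda(u), \qquad r\ge 0,
\]
which is clearly non-decreasing in $r$, and to pick $\sigma$ in the infimum of the conclusion to be of the form $\sigma=\rho-\eps$ for a sufficiently small $\eps>0$. Then the inequality in \eqref{eq:inf_rho_sigma} reduces to a purely one-variable calculus statement about $S$ near $\rho$.

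First, I would unpack the hypothesis \eqref{eq:tic_ipotesi}: by definition of $\limsup$, there exist constants $c\in(0,\rho^{p-1})$ and $\eps_0>0$ such that
\[
S(\rho)-S(\rho-\eps)\le c\,\eps \qquad \text{for every } \eps\in(0,\eps_0).
\]
Next, I would use the elementary identity
\[
\rho^p-(\rho-\eps)^p = \int_{\rho-\eps}^{\rho} p\,t^{p-1}\,\di t,
\]
valid for $\eps\in(0,\rho)$, together with continuity of $t\mapsto t^{p-1}$ at $t=\rho$, to conclude that
\[
\lim_{\eps\to 0+}\frac{\rho^p-(\rho-\eps)^p}{p\,\eps}=\rho^{p-1}.
\]
Since $c<\rho^{p-1}$, there exists $\eps_1\in(0,\min\{\eps_0,\rho\})$ such that
\[
c\,\eps<\frac{1}{p}\bigl(\rho^p-(\rho-\eps)^p\bigr) \qquad \text{for every } \eps\in(0,\eps_1).
\]

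Combining the two displays, for any $\eps\in(0,\eps_1)$ and setting $\sigma:=\rho-\eps\in(0,\rho)$, I get
\[
S(\rho)-S(\sigma)\le c\,\eps<\frac{1}{p}\bigl(\rho^p-\sigma^p\bigr),
\]
i.e.
\[
\frac{S(\rho)-S(\sigma)}{\rho^p-\sigma^p}<\frac{1}{p},
\]
which immediately implies the desired bound \eqref{eq:inf_rho_sigma} on the infimum. No step here is a genuine obstacle: the only slightly delicate point is turning the $\limsup$ hypothesis into a \emph{uniform} linear bound $S(\rho)-S(\rho-\eps)\le c\eps$ on a full right-neighbourhood of $0$, which is what allows the comparison with the Taylor expansion of $t\mapsto t^p$ at $t=\rho$ to be carried out without relying on differentiability of~$S$.
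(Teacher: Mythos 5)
Your proof is correct and follows essentially the same route as the paper: both arguments convert the $\limsup$ hypothesis into a bound with denominator $\rho^p-(\rho-\eps)^p$ via the elementary limit $\lim_{\eps\to0^+}\bigl(\rho^p-(\rho-\eps)^p\bigr)/(p\,\eps)=\rho^{p-1}$, and then take $\sigma=\rho-\eps$ for a suitably small $\eps$ to bound the infimum. Your only cosmetic difference is making the intermediate constant $c$ explicit instead of manipulating the $\limsup$ of a product, which is a harmless (and clean) reformulation.
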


\begin{proof}
Let $\eps\in(0,\rho)$ and note that 
\begin{align*}
\lim_{\eps\to0^+}
\frac{\eps}{\rho^p-(\rho-\eps)^p}
=
\frac{1}{p\rho^{p-1}}.
\end{align*}
Therefore, in virtue of~\eqref{eq:tic_ipotesi}, we get that
\begin{align*}
\limsup_{\eps\to0^+}\,
&
\frac{\sup\limits_{u\in\Phi^{-1}([0,\rho])}\Psi_\lambda(u)-\sup\limits_{u\in\Phi^{-1}([0,\rho-\eps])}\Psi_\lambda(u)}{\rho^p-(\rho-\eps)^p}
\\
&=
\limsup_{\eps\to0^+}\,
\frac{\sup\limits_{u\in\Phi^{-1}([0,\rho])}\Psi_\lambda(u)-\sup\limits_{u\in\Phi^{-1}([0,\rho-\eps])}\Psi_\lambda(u)}{\eps}
\cdot
\frac{\eps}{\rho^p-(\rho-\eps)^p}
\\
&=
\frac{1}{p\rho^{p-1}}\,
\limsup_{\eps\to0^+}\,
\frac{\sup\limits_{u\in\Phi^{-1}([0,\rho])}\Psi_\lambda(u)-\sup\limits_{u\in\Phi^{-1}([0,\rho-\eps])}\Psi_\lambda(u)}{\eps}
<
\frac1p.
\end{align*} 
Thus we can find $\bar\eps\in(0,\rho)$ such that
\begin{align*}
\frac{\sup\limits_{u\in\Phi^{-1}([0,\rho])}\Psi_\lambda(u)-\sup\limits_{u\in\Phi^{-1}([0,\rho-\bar\eps])}\Psi_\lambda(u)}{\rho^p-(\rho-\bar\eps)^p}
<
\frac1p	
\end{align*}
and so $\bar\sigma=\rho-\bar\eps<\rho$ gives
\begin{align*}
\inf_{\sigma<\rho}\,
\frac{\sup\limits_{u\in\Phi^{-1}([0,\rho])}\Psi_\lambda(u)-\sup\limits_{u\in\Phi^{-1}([0,\sigma])}\Psi_\lambda(u)}{\rho^p-\sigma^p}
<
\frac{\sup\limits_{u\in\Phi^{-1}([0,\rho])}\Psi_\lambda(u)-\sup\limits_{u\in\Phi^{-1}([0,\bar\sigma])}\Psi_\lambda(u)}{\rho^p-\bar\sigma^p}
<
\frac1p
\end{align*}
proving~\eqref{eq:inf_rho_sigma}. 
The proof is complete. 
\end{proof}

\begin{lemma}\label{lemma:tac}
Let $p\in(1,+\infty)$ and $\lambda>0$. If~\eqref{eq:inf_rho_sigma} holds for some $\rho>0$, then
\begin{equation}
\label{eq:tac_inf}
\inf_{u\in\Phi^{-1}([0,\rho))}
\frac{\sup\limits_{v\in\Phi^{-1}([0,\rho])}\Psi_\lambda(v)-\Psi_\lambda(u)}{\rho^p-\|u\|_{W^{m,p}_0(\Omega)}^p}
<
\frac{1}{p}.
\end{equation} 
\end{lemma}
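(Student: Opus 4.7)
The plan is to exploit the hypothesis by selecting a witness $\sigma<\rho$ and then producing an explicit competitor $u_0\in\Phi^{-1}([0,\rho))$ for the infimum in \eqref{eq:tac_inf}.

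First, using \eqref{eq:inf_rho_sigma}, I would fix some $\bar\sigma\in[0,\rho)$ for which
\[
\frac{\sup\limits_{v\in\Phi^{-1}([0,\rho])}\Psi_\lambda(v)-\sup\limits_{u\in\Phi^{-1}([0,\bar\sigma])}\Psi_\lambda(u)}{\rho^p-\bar\sigma^p}<\frac{1}{p}.
\]
Next I would observe that the set $\Phi^{-1}([0,\bar\sigma])$ is a closed ball in the finite-dimensional Banach space $W^{m,p}_0(\Omega)$, hence compact, and that $\Psi_\lambda$ is (strongly) continuous on $W^{m,p}_0(\Omega)$ (as established right after \eqref{eq:def_F}). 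Therefore the supremum of $\Psi_\lambda$ over $\Phi^{-1}([0,\bar\sigma])$ is attained at some point $u_0\in\Phi^{-1}([0,\bar\sigma])$, so in particular $\|u_0\|_{W^{m,p}_0(\Omega)}\le\bar\sigma<\rho$ and $u_0\in\Phi^{-1}([0,\rho))$.

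With this competitor in hand, the key observation is that the numerator
\[
\sup_{v\in\Phi^{-1}([0,\rho])}\Psi_\lambda(v)-\Psi_\lambda(u_0)
=
\sup_{v\in\Phi^{-1}([0,\rho])}\Psi_\lambda(v)-\sup_{u\in\Phi^{-1}([0,\bar\sigma])}\Psi_\lambda(u)
\]
is nonnegative (since $\Phi^{-1}([0,\bar\sigma])\subset\Phi^{-1}([0,\rho])$), while the denominator $\rho^p-\|u_0\|_{W^{m,p}_0(\Omega)}^p\ge \rho^p-\bar\sigma^p>0$ only increases when $\bar\sigma$ is replaced by $\|u_0\|_{W^{m,p}_0(\Omega)}$. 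Hence
\[
\frac{\sup\limits_{v\in\Phi^{-1}([0,\rho])}\Psi_\lambda(v)-\Psi_\lambda(u_0)}{\rho^p-\|u_0\|_{W^{m,p}_0(\Omega)}^p}
\le
\frac{\sup\limits_{v\in\Phi^{-1}([0,\rho])}\Psi_\lambda(v)-\sup\limits_{u\in\Phi^{-1}([0,\bar\sigma])}\Psi_\lambda(u)}{\rho^p-\bar\sigma^p}<\frac{1}{p},
\]
and taking the infimum over all $u\in\Phi^{-1}([0,\rho))$ on the left-hand side yields \eqref{eq:tac_inf}.

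There is no real obstacle here: the statement is essentially a reformulation trick, trading the sup-of-sup ratio for an inf-over-interior-points ratio. The only non-trivial input is the existence of the maximizer $u_0$, and this is guaranteed by compactness in finite dimension together with the continuity of $\Psi_\lambda$ already verified in the opening paragraph of this section. Note also that the argument does not actually use $p>1$ in any essential way in this step, so I would simply state the proof for $p\in(1,+\infty)$ as in the hypothesis.
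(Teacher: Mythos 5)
Your proof is correct and follows essentially the same route as the paper: extract a witness $\bar\sigma<\rho$ from \eqref{eq:inf_rho_sigma}, use continuity of $\Psi_\lambda$ (with compactness in finite dimension) to attain the supremum over $\Phi^{-1}([0,\bar\sigma])$, and plug the maximizer into the quotient in \eqref{eq:tac_inf} as a competitor. If anything, your handling of the possibility $\|u_0\|_{W^{m,p}_0(\Omega)}<\bar\sigma$ (nonnegative numerator plus a larger denominator) is slightly more careful than the paper's, which asserts the maximizer can be taken with norm exactly $\bar\sigma$.
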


\begin{proof}
In virtue of~\eqref{eq:inf_rho_sigma}, we can find $\bar\sigma\in(0,\rho)$ such that
\begin{align*}
\sup\limits_{u\in\Phi^{-1}([0,\bar\sigma])}\Psi_\lambda(u)
>
\sup\limits_{u\in\Phi^{-1}([0,\rho])}\Psi_\lambda(u)
-
\frac{1}{p}(\rho^p-\bar\sigma^p).
\end{align*}
Since the functional $\Psi_\lambda$ is continuous on $W^{m,p}_0(\Omega)$, we can find $\bar u\in W^{m,p}_0(\Omega)$ with $\|\bar u\|_{W^{m.p}_0(\Omega)}=\bar\sigma$ such that
\begin{align*}
\sup\limits_{u\in\Phi^{-1}([0,\bar\sigma])}\Psi_\lambda(u)
=
\sup\limits_{\|u\|_{W^{m,p}_0(\Omega)}=\,\bar\sigma}\Psi_\lambda(u)
=
\Psi_\lambda(\bar u)
\end{align*}
and so
\begin{align*}
\Psi_\lambda(\bar u)
>
\sup\limits_{u\in\Phi^{-1}([0,\rho])}\Psi_\lambda(u)
-
\frac{1}{p}(\rho^p-\bar\sigma^p).
\end{align*}
We thus conclude that
\begin{align*}
\inf_{u\in\Phi^{-1}([0,\rho))}
\frac{\sup\limits_{v\in\Phi^{-1}([0,\rho])}\Psi_\lambda(v)-\Psi_\lambda(u)}{\rho^p-\|u\|_{W^{m,p}_0(\Omega)}^p}
<
\frac{\sup\limits_{v\in\Phi^{-1}([0,\rho])}\Psi_\lambda(v)-\Psi_\lambda(\bar u)}{\rho^p-\|\bar u\|_{W^{m,p}_0(\Omega)}^p}
<
\frac1p
\end{align*}
proving~\eqref{eq:tac_inf}.
The proof is complete. 
\end{proof}

We are now ready to prove our first main result, in analogy with~\cite{FMBR16}*{Theorem~3.1}.

\begin{proof}[Proof of \cref{th:main}]
Let $\lambda>0$ and consider the energy functional $\E_\lambda\colon W^{m,p}_0(\Omega)\to\R$ defined as
\begin{align*}
\E_\lambda(u)
=
\frac{\Phi(u)^p}{p}-\Psi_\lambda(u) 
\quad
\text{for all}\ u\in W^{m,p}_0(\Omega),
\end{align*}
where $\Phi$ and $\Psi_\lambda$ are as in~\eqref{eq:def_Phi_Psi}. 
By the growth condition~\eqref{eq:growth} and \cref{th:sobolev}, we have that $\E_\lambda\in C^1(W^{m,p}_0(\Omega);\R)$, with derivative at $u\in W^{m,p}_0(\Omega)$ given by
\begin{align*}
\E'_\lambda(u)[\phi]
=
\begin{cases}
\displaystyle\int_\Omega |\nabla^m u|^{p-2}\,\Gamma(\Delta^{\frac{m-1}{2}}u,\Delta^{\frac{m-1}{2}}\phi)\di\m
-
\lambda\int_\Omega f(x,u)\,\phi\di\m
& 
\text{if $m$ is odd},\\[5mm]
\displaystyle\int_\Omega |\nabla^m u|^{p-2}\,\Delta^{\frac{m}{2}}u\,\Delta^{\frac{m}{2}}\phi\di\m
-
\lambda\int_\Omega f(x,u)\,\phi\di\m
&
\text{if $m$ is even},
\end{cases}
\end{align*}
for any $\phi\in W^{m,p}_0(\Omega)$. 
In particular, the solutions of the problem~\eqref{eq:yamabe} are exactly the critical points of the functional $\E_\lambda$. 
Now let $\rho>0$ to be fixed later. 
Since $\E_\lambda$ is a continuous functional on~$W^{m,p}_0(\Omega)$, there exists $u_{\lambda,\rho}\in\Phi^{-1}([0,\rho])$ such that
\begin{equation}\label{eq:minimality_u}
\E_\lambda(u_{\lambda,\rho})
=
\inf_{u\in\Phi^{-1}([0,\rho])}\E_\lambda(u). 
\end{equation}
To conclude the proof, we just need to show that $\|u_{\lambda,\rho}\|_{W^{m,p}_0(\Omega)}<\rho$. 
To this aim, for $\eps\in(0,\rho)$ we consider
\begin{equation*}
\Lambda(\rho,\eps)
=
\frac{\sup\limits_{u\in\Phi^{-1}([0,\rho])}\Psi_\lambda(u)-\sup\limits_{u\in\Phi^{-1}([0,\rho-\eps])}\Psi_\lambda(u)}{\eps}.
\end{equation*}
Recalling the definition of $\Psi_\lambda$ in~\eqref{eq:def_Phi_Psi}, we have
\begin{align*}
\Lambda(\rho,\eps)
&=
\frac{1}{\eps}\,
\bigg(\sup\limits_{u\in\Phi^{-1}([0,\rho])}\Psi_\lambda(u)-\sup\limits_{u\in\Phi^{-1}([0,\rho-\eps])}\Psi_\lambda(u)\bigg)\\
&\le
\frac{\lambda}{\eps}\,
\sup_{u\in\Phi^{-1}([0,1])}\,
\int_\Omega\,\abs*{\int_{(\rho-\eps)u(x)}^{\rho u(x)}|f(x,t)|\di t\,}\di\m(x).
\end{align*}
Thanks to the growth condition~\eqref{eq:growth}, we can estimate
\begin{align*}
\frac{\lambda}{\eps}\int_\Omega\bigg|\int_{(\rho-\eps)u(x)}^{\rho u(x)}&|f(x,t)|\di t\,\bigg|\di\m(x)\\
&\le
\frac{\lambda}{\eps}\int_\Omega \eps\,a(x)|u(x)|
+
b(x)\left(\frac{\rho^{q+1}-(\rho-\eps)^{q+1}}{q+1}\right)|u(x)|^{q+1}\di\m(x)\\
&\le
\lambda\|u\|_{L^\infty(\Omega)}\|a\|_{L^1(\Omega)}+\frac{\lambda\|u\|_{L^\infty(\Omega)}^{q+1}\|b\|_{L^1(\Omega)}}{q+1}\left(\frac{\rho^{q+1}-(\rho-\eps)^{q+1}}{\eps}\right)
\end{align*}
for all $u\in W^{m,p}_0(\Omega)$.
Thus, by the embedding inequality~\eqref{eq:sobolev}, we get
\begin{equation*}
\Lambda(\rho,\eps)
\le
\lambda\,C_{m,p}\|a\|_{L^1(\Omega)}
+
\frac{\lambda\, C_{m,p}^{q+1}\|b\|_{L^1(\Omega)}}{q+1}\left(\frac{\rho^{q+1}-(\rho-\eps)^{q+1}}{\eps}\right)
\end{equation*}
and so
\begin{equation*}
\limsup_{\eps\to0+}
\Lambda(\rho,\eps)
\le
\lambda
\left(
C_{m,p}\|a\|_{L^1(\Omega)}
+
C_{m,p}^{q+1}\|b\|_{L^1(\Omega)}\,\rho^q
\right).
\end{equation*}
We now define
\begin{equation}\label{eq:precise_Lambda}
\lambda_\rho
=
\frac{\rho^{p-1}}{C_{m,p}\|a\|_{L^1(\Omega)}+C_{m,p}^{q+1}\|b\|_{L^1(\Omega)}\,\rho^q}
\in
(0,+\infty)
\end{equation}
and, consequently,
\begin{equation*}
\Lambda
=
\sup_{\rho>0}\lambda_\rho\in(0,+\infty)
\end{equation*}
(note that $\Lambda<+\infty$ is ensured by the fact that $q\ge p-1$).
Now fix $\lambda<\Lambda$ and choose the parameter $\rho>0$ in such a way that $\lambda<\lambda_\rho<\Lambda$. 
This choice implies that
\begin{align*}
\limsup_{\eps\to0+}
\Lambda(\rho,\eps)
&\le
\lambda
\left(
C_{m,p}\|a\|_{L^1(\Omega)}
+
C_{m,p}^{q+1}\|b\|_{L^1(\Omega)}\,\rho^q
\right)
\\
&<
\lambda_\rho
\left(
C_{m,p}\|a\|_{L^1(\Omega)}
+
C_{m,p}^{q+1}\|b\|_{L^1(\Omega)}\,\rho^q
\right)
<\rho^{p-1},
\end{align*}
so that
\begin{align*}
\limsup_{\eps\to0+}\,
\frac{\sup\limits_{u\in\Phi^{-1}([0,\rho])}\Psi_\lambda(u)-\sup\limits_{u\in\Phi^{-1}([0,\rho-\eps])}\Psi_\lambda(u)}{\eps}
<
\rho^{p-1}.
\end{align*}
We can now apply \cref{lemma:tic} to get that
\begin{equation*}
\inf_{\sigma<\rho}\frac{\sup\limits_{u\in\Phi^{-1}([0,\rho])}\Psi_\lambda(u)-\sup\limits_{u\in\Phi^{-1}([0,\sigma])}\Psi_\lambda(u)}{\rho^p-\sigma^p}<\frac{1}{p}
\end{equation*}
and so, by \cref{lemma:tac}, we infer that
\begin{equation*}
\inf_{u\in\Phi^{-1}([0,\rho))}\frac{\sup\limits_{v\in\Phi^{-1}([0,\rho])}\Psi_\lambda(v)-\Psi_\lambda(u)}{\rho^p-\|u\|_{W^{m,p}_0(\Omega)}^p}<\frac{1}{p}.
\end{equation*}
The above inequality implies that there exists $w_{\lambda,\rho}\in\Phi^{-1}([0,\rho))$ such that
\begin{equation*}
\sup_{v\in\Phi^{-1}([0,\rho])}\Psi_\lambda(v)
<
\Psi_\lambda(w_{\lambda,\rho})
+
\frac{\rho^p-\|w_{\lambda,\rho}\|_{W^{m,p}_0(\Omega)}^p}{p}.
\end{equation*}
Now, if by contradiction we assume that $\|u_{\lambda,\rho}\|_{W^{m,p}_0(\Omega)}=\rho$, then the previous inequality implies that
\begin{equation*}
\Psi_\lambda(u_{\lambda,\rho})
<
\Psi_\lambda(w_{\lambda,\rho})
+
\frac{\rho^p-\|w_{\lambda,\rho}\|_{W^{m,p}_0(\Omega)}^p}{p}
\end{equation*}
which is equivalent to $\E_\lambda(u_{\lambda,\rho})>\E_\lambda(w_{\lambda,\rho})$, contradicting~\eqref{eq:minimality_u}.
The proof is complete.
\end{proof}

\begin{remark}[The precise value of $\Lambda$ in \cref{th:main}]\label{rem:Lambda}
Note that the above proof allows to give a precise value to the existence threshold $\Lambda>0$ in \cref{th:main}.
Indeed, one just need to find the maximal value of the function defined in~\eqref{eq:precise_Lambda}, which is explicitly computable in term of $p$, $q$, $\|a\|_{L^1(\Omega)}$, $\|b\|_{L^1(\Omega)}$ and $C_{m,p}$.
In particular, in the limiting case $q=p-1$, one has
\begin{align*}
\Lambda
=
\lim_{\rho\to+\infty}
\frac{\rho^{p-1}}{C_{m,p}\|a\|_{L^1(\Omega)}+C_{m,p}^{p}\|b\|_{L^1(\Omega)}\,\rho^{p-1}}
=
\frac{1}{C_{m,p}^{p}\|b\|_{L^1(\Omega)}},
\end{align*}
which does not depend on $\|a\|_{L^1(\Omega)}$.
\end{remark}

\section{Proof of \texorpdfstring{\cref{res:main_uniqueness}}{Theorem 1.2}}\label{proof2}

In this section we prove \cref{res:main_uniqueness}. 
The overall strategy is to adapt the line developed in~\cite{BMP07}*{Appendix~B} for the Euclidean setting to the present framework. 
Note that~\cite{BMP07} is focused on the case $p=2$ only.
Nonetheless, exploiting the explicit expression~\eqref{eq:p-laplacian} of the $p$-Laplacian, we are able to extend the approach of~\cite{BMP07} also to the case $p\ne2$.

We begin with the following result, analogous to~\cite{BMP07}*{Lemma~B.1}. 

\begin{lemma}\label{res:H_function}
Let $G=(V,E)$ be a weighted locally finite graph. 
Let $\Omega\subset V$ be a bounded domain such that $\Omega^\circ\ne\varnothing$ and $\de\Omega\ne\varnothing$. 
Let $p\in[1,+\infty)$ and $f\in L^1(\Omega)$. 
If $u\in W^{1,p}_0(\Omega)$ is a solution of the problem
\begin{equation}
\label{eq:p_Lap_H_function}	
\begin{cases}
-\Delta_p u = f & \text{in}\ \Omega^\circ\\[1mm]
u = 0 & \text{on}\ \de\Omega,
\end{cases}
\end{equation}
then
\begin{equation*}
\int_\Omega f\, H(u)\di\m \ge0
\end{equation*}
for every non-decreasing locally Lipschitz function $H\colon\R\to\R$ such that $H(0)=0$.
\end{lemma}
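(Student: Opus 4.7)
The natural approach is to test the equation against $H(u)$ itself. First I would verify that $H(u)\in W^{1,p}_0(\Omega)$: the composition $H\circ u\colon\Omega\to\R$ makes sense pointwise, and since $u=0$ on $\de\Omega$ and $H(0)=0$, one has $H(u)=0$ on $\de\Omega$. Because $\Omega$ is finite, this is exactly the characterization of $W^{1,p}_0(\Omega)=C^1_0(\Omega)$ given in~\eqref{eq:def_C_m0}, so no density or approximation argument is needed; the local Lipschitz hypothesis on $H$ plays no role beyond ensuring that $H(u)$ is well defined.

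Testing $-\Delta_p u=f$ against the admissible function $H(u)$ and integrating over $\Omega$---the boundary contribution trivially vanishes because $H(u)\equiv 0$ on $\de\Omega$---gives
\begin{equation*}
\int_\Omega f\,H(u)\di\m = -\int_\Omega \Delta_p u\cdot H(u)\di\m.
\end{equation*}
Next I would expand the right-hand side by plugging the explicit pointwise formula~\eqref{eq:p-laplacian} into the sum $\sum_{x\in\Omega}\m(x)\,H(u(x))\,\Delta_p u(x)$, and then symmetrize the resulting double sum via the swap $x\leftrightarrow y$, which is allowed by $w_{xy}=w_{yx}$ and by the fact that the coefficient $|\nabla u|^{p-2}(x)+|\nabla u|^{p-2}(y)$ is invariant under the swap. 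A short computation yields
\begin{multline*}
-\int_\Omega \Delta_p u\cdot H(u)\di\m \\
=\frac{1}{2}\sum_{x,y\in\Omega} w_{xy}\bigl(|\nabla u|^{p-2}(x)+|\nabla u|^{p-2}(y)\bigr)\bigl(u(y)-u(x)\bigr)\bigl(H(u(y))-H(u(x))\bigr).
\end{multline*}

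At this point the conclusion is immediate: $w_{xy}\ge 0$ and $|\nabla u|^{p-2}\ge 0$ by definition, while the monotonicity of $H$ gives $(u(y)-u(x))(H(u(y))-H(u(x)))\ge 0$ for every pair $x,y\in\Omega$, so each summand is non-negative. The only step that requires genuine care is the symmetrization, which is the reason why the explicit form~\eqref{eq:p-laplacian} of the $p$-Laplacian is used rather than its variational definition; as a minor caveat, for $p=1$ one adopts the convention that $|\nabla u|^{-1}(x)(u(y)-u(x))$ is read as zero whenever $|\nabla u|(x)=0$, since in that case $u(y)=u(x)$ for all $y$ with $w_{xy}>0$ and the corresponding contributions vanish identically.
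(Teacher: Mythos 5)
Your proof is correct and takes essentially the same route as the paper: test the equation with $H(u)\in W^{1,p}_0(\Omega)$ (legitimate since $H(0)=0$ and $\Omega$ is finite) and use the monotonicity of $H$ to obtain the sign. The only difference is cosmetic: you rederive the integration by parts from the explicit formula \eqref{eq:p-laplacian} by symmetrizing the double sum, whereas the paper invokes the distributional definition \eqref{eq:def_mp_Lap} directly and observes that $\Gamma(u,H(u))\ge 0$ pointwise; the two nonnegative quantities are the same.
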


\begin{proof}
We start by observing that $H(u)\in W^{1,p}_0(\Omega)$. 
Indeed, $H(v)\in C^0_0(\Omega)$ for all $v\in C^0_0(\Omega)$ with $|\nabla H(v)|\le L|\nabla v|$ on~$\Omega$, where $L=\mathrm{Lip}(H,[-c,c])$, $c=\|v\|_{L^\infty(\Omega)}$.
Using $H(u)$ as a test function in~\eqref{eq:p_Lap_H_function}, we get
\begin{align*}
\int_\Omega f\,H(u)\di\m
=
-\int_\Omega\Delta_p u\, H(u)\di\m
=
\int_\Omega|\nabla u|^{p-2}\,\Gamma(u,H(u))\di\m\ge0,
\end{align*}
because
\begin{align*}
\Gamma(u,H(u))(x)
=
\frac{1}{2\m(x)}\sum_{y\in\Omega}w_{xy}(u(y)-u(x))(H(u(y))-H(u(x)))\ge0
\end{align*}
since $H$ is non-decreasing.
The proof is complete.
\end{proof}

As a consequence, and in analogy with~\cite{BMP07}*{Proposition~B.2}, from \cref{res:H_function} we deduce the following result.

\begin{corollary}\label{res:sgn}
Let $G=(V,E)$ be a weighted locally finite graph. 
Let $\Omega\subset V$ be a bounded domain such that $\Omega^\circ\ne\varnothing$ and $\de\Omega\ne\varnothing$. 
Let $p\in[1,+\infty)$, $M>0$ and $f\in L^1(\Omega)$. 
If $u\in W^{1,p}_0(\Omega)$ is a solution of the problem
\begin{equation*}
\begin{cases}
-\Delta_p u = f & \text{in}\ \Omega^\circ\\[1mm]
u = 0 & \text{on}\ \de\Omega,
\end{cases}
\end{equation*}
then
\begin{equation*}
\int_{\Omega\cap\set*{u\ge M}} f\di\m \ge0,
\quad
\int_{\Omega\cap\set*{u\le -M}} f\di\m \ge0.
\end{equation*}
In particular,
\begin{equation*}
\int_{\Omega\cap\set*{|u|\ge M}} f\,\sgn(u)\di\m \ge0,
\end{equation*}
where $\sgn\colon\R\to\R$ is the \emph{sign function} defined by $\sgn(t)=\frac{t}{|t|}$ for $t\ne0$ and $\sgn(0)=0$.
\end{corollary}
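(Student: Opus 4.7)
The plan is to obtain both displayed inequalities as immediate consequences of \cref{res:H_function} by choosing $H$ to be a suitable Lipschitz approximation of an indicator function, and then to deduce the sign-function statement by decomposing $\{|u|\ge M\}$ into $\{u\ge M\}\cup\{u\le-M\}$.

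For the first inequality, I would construct, for each $n\in\N$ large enough that $M-\frac{1}{n}>0$, the piecewise linear function $H_n\colon\R\to\R$ defined by $H_n(t)=0$ for $t\le M-\frac{1}{n}$, $H_n(t)=n(t-M+\frac{1}{n})$ for $t\in[M-\frac{1}{n},M]$, and $H_n(t)=1$ for $t\ge M$. Each $H_n$ is non-decreasing, globally Lipschitz and satisfies $H_n(0)=0$ (this is precisely where the hypothesis $M>0$ intervenes), so \cref{res:H_function} applies and gives $\int_\Omega f\,H_n(u)\di\m\ge 0$. Since $\Omega$ is a finite set and $|H_n|\le 1$ uniformly, a trivial dominated-convergence argument lets me pass to the limit $H_n(u(x))\to\mathbf 1_{\{u\ge M\}}(x)$ pointwise, yielding $\int_{\Omega\cap\{u\ge M\}}f\di\m\ge 0$.

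For the second inequality, I would exploit the fact that the explicit expression~\eqref{eq:p-laplacian} shows that $\Delta_p$ is odd: replacing $u$ by $-u$ leaves $|\nabla u|$ unchanged and flips the sign of each increment $u(y)-u(x)$, so $\Delta_p(-u)=-\Delta_p u$. Consequently $v:=-u$ belongs to $W^{1,p}_0(\Omega)$ and solves $-\Delta_p v=-f$ in $\Omega^\circ$ with $v=0$ on $\de\Omega$. Applying the first part of the corollary to $v$ with datum $-f$ yields the companion inequality on $\{v\ge M\}=\{u\le -M\}$, from which the second displayed estimate follows.

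Finally, the ``in particular'' statement is just arithmetic: splitting the integration domain and using that $\sgn(u)\equiv 1$ on $\{u\ge M\}$ and $\sgn(u)\equiv -1$ on $\{u\le-M\}$, one writes
\[
\int_{\Omega\cap\{|u|\ge M\}}f\,\sgn(u)\di\m=\int_{\Omega\cap\{u\ge M\}}f\di\m-\int_{\Omega\cap\{u\le-M\}}f\di\m,
\]
and combines the two previously obtained sign inequalities. There is no real obstacle here; the only delicate point is verifying that $H_n(0)=0$, which forces the strict positivity $M>0$ in the hypothesis, and checking that the pointwise limit of $H_n(u)$ agrees with $\mathbf 1_{\{u\ge M\}}$ on the \emph{closed} set $\{u\ge M\}$ (handled by approximating the indicator from the left rather than from the right).
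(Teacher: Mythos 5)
Your proof of the first inequality coincides with the paper's: the same truncations $H_n$ ramping linearly from $M-\tfrac1n$ to $M$, an application of \cref{res:H_function} (where $M>0$ guarantees $H_n(0)=0$), and the passage to the limit, which is indeed trivial because $\Omega$ is a finite set. Where you genuinely depart from the paper is in the remaining claims: the paper simply says they follow ``by linearity'', i.e.\ by feeding the mirrored truncations $t\mapsto -H_n(-t)$ into \cref{res:H_function}, whereas you reduce to the first case via the oddness of the $p$-Laplacian, $\Delta_p(-u)=-\Delta_p u$, which is immediate from \eqref{eq:p-laplacian} because $|\nabla(-u)|=|\nabla u|$ while every increment changes sign. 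That reduction is correct and is a clean substitute for the paper's (rather terse) argument.

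Be careful, however, about the sign you actually obtain. Applying the first part to $v=-u$ with datum $-f$ gives $\int_{\Omega\cap\{v\ge M\}}(-f)\di\m\ge 0$, that is
\begin{equation*}
\int_{\Omega\cap\{u\le -M\}} f\di\m\;\le\;0,
\end{equation*}
which is the \emph{opposite} of the second inequality as displayed in the corollary. So your claim that ``the second displayed estimate follows'' is a slip: what your argument proves is the $\le 0$ version. This is in fact the version you need, since your decomposition
$\int_{\Omega\cap\{|u|\ge M\}} f\,\sgn(u)\di\m=\int_{\Omega\cap\{u\ge M\}} f\di\m-\int_{\Omega\cap\{u\le -M\}} f\di\m$
yields the ``in particular'' conclusion only when the second integral is $\le 0$; with both integrals $\ge 0$, as literally stated, the final inequality would not follow at all. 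The $\ge 0$ in the corollary's second display is evidently a misprint (the analogue in Brezis--Marcus--Ponce, Proposition~B.2, has the sign you derive, and a one-dimensional example on a path graph with $u\equiv -1$ in the interior rules out the $\ge 0$ version). You should therefore state explicitly that you prove $\int_{\Omega\cap\{u\le -M\}} f\di\m\le 0$ and that this, combined with the first inequality, gives the last estimate; as written, your text asserts one inequality and silently uses its opposite.
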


\begin{proof}
For every $n\in\N$ such that $n>\frac1M$, we let $H_n\colon\R\to\R$ be the function
\begin{equation*}
H_n(t)
=
\begin{cases}
0 & \text{for}\ t\le M-\frac1n\\[2mm]
nt-nM+1 & \text{for}\ M-\frac1n<t<M\\[2mm]
1 & \text{for}\ t\ge M.
\end{cases}
\end{equation*}
Since $H_n$ is Lipschitz, non-decreasing and such that $H_n(0)=0$, by \cref{res:H_function} we get that
\begin{equation*}
\int_\Omega f\,H_n(u)\di\m\ge0.
\end{equation*}
Passing to the limit as $n\to+\infty$, we find that
\begin{equation*}
\int_{\Omega\cap\set*{u\ge M}} f\di\m\ge0,
\end{equation*}
as desired.
The conclusion thus follows by linearity.  
\end{proof}

We are now ready to prove our second main result, in analogy with~\cite{BMP07}*{Corollary~B.1}.

\begin{proof}[Proof of \cref{res:main_uniqueness}]
The function $v=u_1-u_2\in W^{1,p}_0(\Omega)$ solves the problem
\begin{equation}\label{eq:v_problem}
\begin{cases}
-\Delta_p v = F & \text{in}\ \Omega^\circ\\[2mm]
v = 0 & \text{on}\ \de\Omega
\end{cases}
\end{equation}
with $F=f_1-f_2-g(x,u_1)+g(x,u_2)\in L^1(\Omega)$. 
By \cref{res:sgn}, we have that
\begin{equation*}
\int_\Omega F\sgn(v)\di\m\ge0,
\end{equation*}
which is equivalent to
\begin{align*}
\int_\Omega(g(x,u_1)-g(x,u_2))\,\sgn(u_1-u_2)\di\m
\le
\int_\Omega	(f_1-f_2)\,\sgn(u_1-u_2)\di\m
\end{align*}
and~\eqref{eq:oscillation} immediately follows. 
As a consequence, if $f_1=f_2$ then also $g(x,u_1)=g(x,u_2)$ and thus $F=0$ in~\eqref{eq:v_problem}. 
Therefore $\Delta_p v=0$ in $\Omega$ and thus
\begin{align*}
\sup_\Omega |v|
\le
C_{1,p}
\int_\Omega|\nabla v|^p\di\m
=
-C_{1,p}
\int_\Omega v\,\Delta_p v\di\m
=0
\end{align*}
by \cref{th:sobolev} and~\eqref{eq:def_mp_Lap}, so that $u_1=u_2$.
The proof is complete.
\end{proof}

\section{Applications}\label{proof3}

In this last section we briefly discuss some applications of our main results.

We begin by stating the following result, which shows that the Dirichlet problem in $W^{1,2}_0(\Omega)$ for the Laplcian operator with sufficiently well-behaved non-linearity admits a unique solution.

\begin{corollary}\label{res:bambi}
Let $G=(V,E)$ be a weighted locally finite graph. 
Let $\Omega\subset V$ be a bounded domain such that $\Omega^\circ\ne\varnothing$ and $\de\Omega\ne\varnothing$. 
Let $g\colon\Omega\times\R\to\R$ be a function such that $t\mapsto g(x,t)$ is $C^1$ and non-decreasing with $g(x,0)=\de_t g(x,0)=0$ for all $x\in\Omega$.
Let us set $\bar f(x)=g(x,0)$ for all $x\in\Omega$.
There exists $\delta>0$ with the following property: if $f\in L^2(\Omega)$ with $\|f-\bar f\|_{L^2(\Omega)}<\delta$, then the problem
\begin{equation*}
\begin{cases}
-\Delta u+g(x,u)=f 
& \text{in}\ \Omega^\circ\\[2mm]
u=0
& \text{on}\ \de\Omega
\end{cases}
\end{equation*}	
admits a unique solution $u\in W^{1,2}_0(\Omega)$.
\end{corollary}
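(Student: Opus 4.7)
My plan is to split the argument into existence and uniqueness: existence will come from the finite-dimensional inverse function theorem applied to the perturbation $u\mapsto -\Delta u+g(\cdot,u)$ of the Laplacian, and uniqueness from a direct appeal to \cref{res:main_uniqueness}.

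For existence I would work in the finite-dimensional Banach space $W^{1,2}_0(\Omega)$ (finite-dimensional by \cref{th:sobolev} and the fact that $\Omega$ is a finite set) and consider the map $T\colon W^{1,2}_0(\Omega)\to L^2(\Omega^\circ)$ defined by $T(u)=-\Delta u+g(\cdot,u)$. Since $\Omega$ is finite and $g$ is $C^1$ in its second argument, $T$ is of class $C^1$; the hypotheses $g(x,0)=0$ and $\de_t g(x,0)=0$ yield $T(0)=0$ and $DT(0)[v]=-\Delta v$. The key step is then to verify that $-\Delta\colon W^{1,2}_0(\Omega)\to L^2(\Omega^\circ)$ is a linear isomorphism. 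Both spaces have common finite dimension $|\Omega^\circ|$, so it suffices to check injectivity: if $v\in W^{1,2}_0(\Omega)$ satisfies $\Delta v\equiv 0$ on $\Omega^\circ$, testing against $v$ itself and using the weak formulation of $\fl_{1,2}$ from \eqref{eq:def_mp_Lap} yields $\int_\Omega|\nabla v|^2\di\m=0$, whence $v\equiv 0$ by the embedding inequality in \cref{th:sobolev}. The inverse function theorem then furnishes $\delta>0$ and a neighborhood $U$ of $0$ in $W^{1,2}_0(\Omega)$ such that, for every $f\in L^2(\Omega)$ with $\|f\|_{L^2(\Omega)}<\delta$ (noting that the standing assumption $g(x,0)=0$ forces the quantity $\bar f$ of the statement to vanish identically), the equation $T(u)=f$ has a solution $u\in U$.

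For uniqueness in the \emph{full} space $W^{1,2}_0(\Omega)$, and not merely in the neighborhood $U$ produced by the inverse function theorem, I would apply \cref{res:main_uniqueness} with $p=2$: the monotonicity of $t\mapsto g(x,t)$ together with $g(x,0)=0$ are precisely the hypotheses there. Combining local existence from the previous step with this global uniqueness gives the claim. The only genuinely subtle point I anticipate is the identification of the distributional $(1,2)$-Laplacian defined through \eqref{eq:def_mp_Lap} with the pointwise operator \eqref{eq:laplacian} in order to argue cleanly that $T$ is $C^1$ as a map between the two finite-dimensional Hilbert spaces at issue; once that identification is in place, every remaining step reduces to standard calculus in $\R^{|\Omega^\circ|}$.
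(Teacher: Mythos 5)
Your argument is correct and follows essentially the same route as the paper: existence via the inverse function theorem applied to $u\mapsto-\Delta u+g(\cdot,u)$ linearized at $0$ (where $\de_t g(x,0)=0$ gives $\mathcal F'(0)=-\Delta$), combined with global uniqueness from \cref{res:main_uniqueness} with $p=2$. If anything you are slightly more careful than the paper's auxiliary lemma, since you take $L^2(\Omega^\circ)$ as the target space so that the dimension count together with the injectivity of $-\Delta$ on $W^{1,2}_0(\Omega)$ genuinely yields an isomorphism, and you note that the hypothesis $g(x,0)=0$ forces $\bar f\equiv 0$ in the statement.
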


The uniqueness part in \cref{res:bambi} is clearly immediately achieved by \cref{res:main_uniqueness}, while the existence part follows from the following result, which is inspired by the work~\cite{BC98}.

\begin{lemma}
Let $G=(V,E)$ be a weighted locally finite graph. 
Let $\Omega\subset V$ be a bounded domain such that $\Omega^\circ\ne\varnothing$ and $\de\Omega\ne\varnothing$. 
Let $g\colon\Omega\times\R\to\R$ be a function such that $t\mapsto g(x,t)$ is of class $C^1$ with $\de_t g(x,0)=0$ for all $x\in\Omega$.
Let us set $\bar f(x)=g(x,0)$ for all $x\in\Omega$. 
There exist $\delta,\eps>0$ with the following property: if $f\in L^2(\Omega)$ with $\|f-\bar f\|_{L^2(\Omega)}<\delta$, then the problem
\begin{equation*}
\begin{cases}
-\Delta u+g(x,u)=f 
& \text{in}\ \Omega^\circ\\[2mm]
u=0
& \text{on}\ \de\Omega
\end{cases}
\end{equation*}	
admits a unique solution $u\in W^{1,2}_0(\Omega)$ with $\|u\|_{W^{1,2}_0(\Omega)}<\eps$.
\end{lemma}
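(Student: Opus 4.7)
The plan is to apply the classical inverse function theorem to a $C^1$ map whose linearization at the origin is the Dirichlet Laplacian. Since $\Omega$ is finite, both $W^{1,2}_0(\Omega)$ and $L^2(\Omega^\circ)$ are finite-dimensional real vector spaces of the same dimension (equal to the cardinality of $\Omega^\circ$), so the Euclidean version of that theorem applies directly once the linearization has been identified and shown to be invertible.

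The map in question is $T\colon W^{1,2}_0(\Omega)\to L^2(\Omega^\circ)$ defined by
\[
T(u)(x):=-\Delta u(x)+g(x,u(x)),\qquad x\in\Omega^\circ,
\]
which satisfies $T(0)=\bar f|_{\Omega^\circ}$ thanks to $g(x,0)=\bar f(x)$. Because $t\mapsto g(x,t)$ is $C^1$ for each $x\in\Omega$ and $\Omega$ is finite, $T$ is of class $C^1$, with Fr\'echet derivative
\[
DT(u)[v](x)=-\Delta v(x)+\de_t g(x,u(x))\,v(x),\qquad v\in W^{1,2}_0(\Omega),\ x\in\Omega^\circ.
\]
Evaluating at $u=0$ and using $\de_t g(x,0)=0$ gives $DT(0)[v]=-\Delta v|_{\Omega^\circ}$.

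It then remains to verify that $-\Delta\colon W^{1,2}_0(\Omega)\to L^2(\Omega^\circ)$ is a linear isomorphism, and since the two spaces have the same finite dimension it suffices to check injectivity. I would do this via a discrete integration by parts: if $v\in W^{1,2}_0(\Omega)$ satisfies $-\Delta v=0$ on $\Omega^\circ$ then, using that $v$ vanishes on $\de\Omega$ and---by the $W^{1,2}_0$ convention---outside $\Omega$, one computes
\[
\int_V|\nabla v|^2\,\di\m=\int_{\Omega^\circ}v\,(-\Delta v)\,\di\m=0,
\]
so $|\nabla v|\equiv 0$ on $V$, hence $\|v\|_{W^{1,2}_0(\Omega)}=0$, and thus $v\equiv 0$ by \cref{th:sobolev}. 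The inverse function theorem then produces a neighborhood $U$ of $0$ in $W^{1,2}_0(\Omega)$ and a neighborhood $W$ of $\bar f|_{\Omega^\circ}$ in $L^2(\Omega^\circ)$ such that $T\colon U\to W$ is a diffeomorphism; choosing $\eps,\delta>0$ so that the $\eps$-ball in $W^{1,2}_0(\Omega)$ lies in $U$ and $\set{f\in L^2(\Omega):\|f-\bar f\|_{L^2(\Omega)}<\delta}$ maps into $W$ upon restriction to $\Omega^\circ$ yields the unique small solution $u=T^{-1}(f|_{\Omega^\circ})$.

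The only mildly delicate point is the book-keeping of function spaces: one must view $T$ as mapping $W^{1,2}_0(\Omega)$ into $L^2(\Omega^\circ)$ rather than into $L^2(\Omega)$, so that the domain and codomain have matching finite dimensions and the inverse function theorem applies cleanly. Once this is arranged, the finiteness of $\Omega$ makes the rest essentially automatic.
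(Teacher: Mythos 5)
Your proposal is correct and follows essentially the same route as the paper: apply the inverse function theorem to the map $u\mapsto-\Delta u+g(x,u)$ at $u=0$, whose linearization is $-\Delta$ because $\de_t g(x,0)=0$, and note $\mathcal F(0)=\bar f$. You are in fact slightly more careful than the paper, which takes the codomain to be $L^2(\Omega)$ and merely asserts that $\mathcal F'(0)=-\Delta$ is invertible; your choice of codomain $L^2(\Omega^\circ)$ (so that domain and codomain have the same finite dimension $\#\Omega^\circ$) and your integration-by-parts plus Sobolev-embedding proof of injectivity supply exactly the details the paper leaves implicit.
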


\begin{proof}
Let us consider the function $\mathcal F\colon W^{1,2}_0(\Omega)\to L^2(\Omega)$ defined by 
\begin{equation*}
\mathcal F(u)
=
-\Delta u+g(x,u)
\end{equation*}
for all $u\in W^{1,2}_0(\Omega)$.
Note that the map $\mathcal F$ is well defined, since $W^{1,2}_0(\Omega)\subset L^\infty(\Omega)$ with continuous embedding by \cref{th:sobolev} and thus also $x\mapsto g(x,u(x))\in L^\infty(\Omega)$ by the continuity property of~$g$ and by the fact that the number of vertices in~$\Omega$ is finite.
We additionally note that $\mathcal F\in C^1(W^{1,2}_0(\Omega),L^2(\Omega))$. 
Indeed, the Laplacian $\Delta$ is linear and the map $u\mapsto g(x,u)$ is of class $C^1$ thanks to the continuity properties of~$g$. 
Finally, we observe that the map $\mathcal F'(0)\colon W^{1,2}_0(\Omega)\to L^2(\Omega)$ is invertible, since $\mathcal F'(0)=-\Delta$ by the assumption that $\de_t g(x,0)=0$ for all $x\in\Omega$.
Since $\mathcal F(0)=\bar f$, the conclusion follows by the Inverse Function Theorem and the proof is complete.
\end{proof}

Our two main results Theorems~\ref{th:main} and~\ref{res:main_uniqueness} can be combined in order to achieve the well-posedness of a Yamabe-type problem on bounded domains, namely \cref{propcomb}.

\begin{proof}[Proof of \cref{propcomb}]
The function $g(x,t)=b(x)|t|^{q-1}t$, defined for $(x,t)\in\Omega\times\R$, satisfies the assumptions of \cref{res:main_uniqueness}, so that problem~\eqref{eq:yamabe_cor} admits at most one solution and we just need to deal with the existence issue.
If $\|a\|_{L^1(\Omega)}=0$, then clearly \mbox{$a=0$} and thus the null function $u=0$ is the unique solution of problem~\eqref{eq:yamabe_cor}.
If $\|a\|_{L^1(\Omega)}>0$ instead, then we apply \cref{th:main}.
Indeed, the function $f(x,t)=a(x)-b(x)|t|^{q-1}t$, defined for $(x,t)\in\Omega\times\R$, satisfies the assumptions of \cref{th:main}, and the conclusion thus follows in virtue of \cref{rem:Lambda}.
\end{proof}

We conclude our paper with the following uniqueness result for a Kazdan--Warner-type problem on bounded domains.
Its proof is a simple application of \cref{res:main_uniqueness} and is thus left to the reader.

\begin{corollary}
Let $G=(V,E)$ be a weighted locally finite graph. 
Let $\Omega\subset V$ be a bounded domain such that $\Omega^\circ\ne\varnothing$ and $\de\Omega\ne\varnothing$. 
Let $p\in[1,+\infty)$ and let $\alpha,\beta\in L^1(\Omega)$ be two non-negative functions.
For every $f\in L^1(\Omega)$ and $h\in L^1(\de\Omega)$, the Kazdan--Warner-type problem
\begin{equation}\label{eq:kazdan-warner}
\begin{cases}
-\Delta_p u + \alpha\,e^{\beta u} = f & \text{in}\ \Omega^\circ\\[2mm]
u = h & \text{on}\ \de\Omega
\end{cases}
\end{equation}	
admits at most one solution $u\in W^{1,p}(\Omega)$.
\end{corollary}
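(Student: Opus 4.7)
The plan is to reduce the Kazdan--Warner-type problem to the setting of \cref{res:main_uniqueness} by shifting the nonlinearity so that it vanishes at zero. The nonlinearity $\alpha(x)e^{\beta(x)u}$ does not satisfy the hypothesis $g(x,0)=0$ required by \cref{res:main_uniqueness}, so I would rewrite the equation as
\begin{equation*}
-\Delta_p u + \alpha(x)\bigl(e^{\beta(x)u}-1\bigr) = f - \alpha \qquad \text{in } \Omega^\circ,
\end{equation*}
with the same boundary condition $u=h$ on $\de\Omega$.

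Next I would set $\tilde g(x,t) = \alpha(x)(e^{\beta(x)t}-1)$ and $\tilde f = f-\alpha$, and verify the hypotheses of \cref{res:main_uniqueness}. Clearly $\tilde g(x,0)=0$ for every $x\in\Omega$. Moreover, since $\alpha(x)\ge 0$ and $\beta(x)\ge 0$, the derivative $\de_t \tilde g(x,t)=\alpha(x)\beta(x)e^{\beta(x)t}\ge 0$, so $t\mapsto \tilde g(x,t)$ is non-decreasing. Also $\tilde f\in L^1(\Omega)$ because $f,\alpha\in L^1(\Omega)$, and $h\in L^1(\de\Omega)$ by assumption.

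Finally, observing that $u\in W^{1,p}(\Omega)$ solves~\eqref{eq:kazdan-warner} if and only if it solves the shifted problem~\eqref{eq:uniqueness_main} with the data $(\tilde g,\tilde f,h)$, \cref{res:main_uniqueness} yields at most one solution, which is precisely the claim. No essential obstacle arises: the only point to watch is the shift of the $\alpha$ term from the nonlinearity to the right-hand side, which preserves $L^1$ regularity and is harmless because the sign and monotonicity structure of $t\mapsto e^{\beta(x)t}-1$ matches exactly the hypotheses of the uniqueness theorem.
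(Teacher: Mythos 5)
Your proposal is correct and is exactly the intended argument: the paper leaves the proof as a ``simple application'' of \cref{res:main_uniqueness}, and your reduction via $\tilde g(x,t)=\alpha(x)\bigl(e^{\beta(x)t}-1\bigr)$, $\tilde f=f-\alpha$ is the natural way to meet the hypothesis $\tilde g(x,0)=0$ while keeping monotonicity (since $\alpha,\beta\ge0$) and the $L^1$ regularity of the data. Nothing is missing.
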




\begin{bibdiv}
\begin{biblist}

\bib{AR73}{article}{
   author={Ambrosetti, Antonio},
   author={Rabinowitz, Paul H.},
   title={Dual variational methods in critical point theory and
   applications},
   journal={J. Functional Analysis},
   volume={14},
   date={1973},
   pages={349--381},
}

\bib{BCG01}{article}{
   author={Barlow, Martin},
   author={Coulhon, Thierry},
   author={Grigor'yan, Alexander},
   title={Manifolds and graphs with slow heat kernel decay},
   journal={Invent. Math.},
   volume={144},
   date={2001},
   number={3},
   pages={609--649},
}

\bib{BC98}{article}{
   author={Brezis, Ha\"{\i}m},
   author={Cabr\'{e}, Xavier},
   title={Some simple nonlinear PDE's without solutions},
   journal={Boll. Unione Mat. Ital. Sez. B Artic. Ric. Mat. (8)},
   volume={1},
   date={1998},
   number={2},
   pages={223--262},
}

\bib{BMP07}{article}{
   author={Brezis, Ha\"{\i}m},
   author={Marcus, M.},
   author={Ponce, A. C.},
   title={Nonlinear elliptic equations with measures revisited},
   conference={
      title={Mathematical aspects of nonlinear dispersive equations},
   },
   book={
      series={Ann. of Math. Stud.},
      volume={163},
      publisher={Princeton Univ. Press, Princeton, NJ},
   },
   date={2007},
   pages={55--109},
}

\bib{BS73}{article}{
   author={Brezis, Ha\"{\i}m},
   author={Strauss, Walter A.},
   title={Semi-linear second-order elliptic equations in $L^{1}$},
   journal={J. Math. Soc. Japan},
   volume={25},
   date={1973},
   pages={565--590},
}

\bib{CGY97}{article}{
   author={Chung, Fan},
   author={Grigor'yan, Alexander},
   author={Yau, Shing-Tung},
   title={Eigenvalues and diameters for manifolds and graphs},
   conference={
      title={Tsing Hua lectures on geometry \& analysis},
      address={Hsinchu},
      date={1990--1991},
   },
   book={
      publisher={Int. Press, Cambridge, MA},
   },
   date={1997},
   pages={79--105},
}

\bib{CGY00}{article}{
   author={Chung, Fan},
   author={Grigor'yan, Alexander},
   author={Yau, Shing-Tung},
   title={Higher eigenvalues and isoperimetric inequalities on Riemannian
   manifolds and graphs},
   journal={Comm. Anal. Geom.},
   volume={8},
   date={2000},
   number={5},
   pages={969--1026},
}

\bib{CCP19}{article}{
   author={Chung, Soon-Yeong},
   author={Choi, Min-Jun},
   author={Park, Jea-Hyun},
   title={On the critical set for Fujita type blow-up of solutions to the
   discrete Laplacian parabolic equations with nonlinear source on networks},
   journal={Comput. Math. Appl.},
   volume={78},
   date={2019},
   number={6},
   pages={1838--1850},
}

\bib{CG98}{article}{
   author={Coulhon, T.},
   author={Grigoryan, A.},
   title={Random walks on graphs with regular volume growth},
   journal={Geom. Funct. Anal.},
   volume={8},
   date={1998},
   number={4},
   pages={656--701},
}

\bib{FMBR16}{article}{
   author={Ferrara, Massimiliano},
   author={Molica Bisci, Giovanni},
   author={Repov\v s, Du\v san},
   title={Nonlinear elliptic equations on Carnot groups},
   journal={Rev. R. Acad. Cienc. Exactas F\'{i}s. Nat. Ser. A Math.},
   date={2016},
   pages={1--12},
}

\bib{G18}{article}{
   author={Ge, Huabin},
   title={A $p$-th Yamabe equation on graph},
   journal={Proc. Amer. Math. Soc.},
   volume={146},
   date={2018},
   number={5},
   pages={2219--2224},
}

\bib{G20}{article}{
   author={Ge, Huabin},
   title={The $p$th Kazdan-Warner equation on graphs},
   journal={Commun. Contemp. Math.},
   volume={22},
   date={2020},
   number={6},
   pages={1950052, 17},
}

\bib{GHJ18}{article}{
   author={Ge, Huabin},
   author={Hua, Bobo},
   author={Jiang, Wenfeng},
   title={A note on Liouville type equations on graphs},
   journal={Proc. Amer. Math. Soc.},
   volume={146},
   date={2018},
   number={11},
   pages={4837--4842},
}

\bib{GJ18-1}{article}{
   author={Ge, Huabin},
   author={Jiang, Wenfeng},
   title={Kazdan-Warner equation on infinite graphs},
   journal={J. Korean Math. Soc.},
   volume={55},
   date={2018},
   number={5},
   pages={1091--1101},
}

\bib{GJ18-2}{article}{
   author={Ge, Huabin},
   author={Jiang, Wenfeng},
   title={Yamabe equations on infinite graphs},
   journal={J. Math. Anal. Appl.},
   volume={460},
   date={2018},
   number={2},
   pages={885--890},
}

\bib{GJ19}{article}{
   author={Ge, Huabin},
   author={Jiang, Wenfeng},
   title={The 1-Yamabe equation on graphs},
   journal={Commun. Contemp. Math.},
   volume={21},
   date={2019},
   number={8},
   pages={1850040, 10},
}

\bib{Gri01}{article}{
   author={Grigor'yan, Alexander},
   title={Heat kernels on manifolds, graphs and fractals},
   conference={
      title={European Congress of Mathematics, Vol. I},
      address={Barcelona},
      date={2000},
   },
   book={
      series={Progr. Math.},
      volume={201},
      publisher={Birkh\"{a}user, Basel},
   },
   date={2001},
   pages={393--406},
}

\bib{Gri18}{book}{
   author={Grigor'yan, Alexander},
   title={Introduction to analysis on graphs},
   series={University Lecture Series},
   volume={71},
   publisher={American Mathematical Society, Providence, RI},
   date={2018},
   pages={viii+150},
}

\bib{GLY16-K}{article}{
   author={Grigor'yan, Alexander},
   author={Lin, Yong},
   author={Yang, Yunyan},
   title={Kazdan-Warner equation on graph},
   journal={Calc. Var. Partial Differential Equations},
   volume={55},
   date={2016},
   number={4},
   pages={Art. 92, 13},
}

\bib{GLY16-Y}{article}{
   author={Grigor'yan, Alexander},
   author={Lin, Yong},
   author={Yang, Yunyan},
   title={Yamabe type equations on graphs},
   journal={J. Differential Equations},
   volume={261},
   date={2016},
   number={9},
   pages={4924--4943},
}

\bib{GLY17}{article}{
   author={Grigor'yan, Alexander},
   author={Lin, Yong},
   author={Yang, Yunyan},
   title={Existence of positive solutions to some nonlinear equations on
   locally finite graphs},
   journal={Sci. China Math.},
   volume={60},
   date={2017},
   number={7},
   pages={1311--1324},
}

\bib{HSZ20}{article}{
   author={Han, Xiaoli},
   author={Shao, Mengqiu},
   author={Zhao, Liang},
   title={Existence and convergence of solutions for nonlinear biharmonic equations on graphs},
   journal={J. Differential Equations},
   volume={268},
   date={2020},
   number={7},
   pages={3936--3961},
}

\bib{LW17-2}{article}{
   author={Lin, Yong},
   author={Wu, Yiting},
   title={The existence and nonexistence of global solutions for a
   semilinear heat equation on graphs},
   journal={Calc. Var. Partial Differential Equations},
   volume={56},
   date={2017},
   number={4},
   pages={Paper No. 102, 22},
}

\bib{LY20}{article}{
   author={Liu, Shuang},
   author={Yang, Yunyan},
   title={Multiple solutions of Kazdan-Warner equation on graphs in the
   negative case},
   journal={Calc. Var. Partial Differential Equations},
   volume={59},
   date={2020},
   number={5},
   pages={Paper No. 164, 15},
}

\bib{MBR17}{article}{
   author={Molica Bisci, Giovanni},
   author={Repov\v s, Du\v san},
   title={Yamabe-type equations on Carnot groups},
   journal={Potential Anal.},
   volume={46},
   date={2017},
   number={2},
   pages={369--383},
}

\bib{Z17}{article}{
   author={Zhang, Dongshuang},
   title={Semi-linear elliptic equations on graphs},
   journal={J. Partial Differ. Equ.},
   volume={30},
   date={2017},
   number={3},
   pages={221--231},
}

\bib{ZZ18}{article}{
   author={Zhang, Ning},
   author={Zhao, Liang},
   title={Convergence of ground state solutions for nonlinear Schr\"{o}dinger
   equations on graphs},
   journal={Sci. China Math.},
   volume={61},
   date={2018},
   number={8},
   pages={1481--1494},
}

\bib{ZC18}{article}{
   author={Zhang, Xiaoxiao},
   author={Chang, Yanxun},
   title={$p$-th Kazdan-Warner equation on graph in the negative case},
   journal={J. Math. Anal. Appl.},
   volume={466},
   date={2018},
   number={1},
   pages={400--407},
}

\bib{ZL18}{article}{
   author={Zhang, Xiaoxiao},
   author={Lin, Aijin},
   title={Positive solutions of $p$-th Yamabe type equations on graphs},
   journal={Front. Math. China},
   volume={13},
   date={2018},
   number={6},
   pages={1501--1514},
}

\bib{ZL19}{article}{
   author={Zhang, Xiaoxiao},
   author={Lin, Aijin},
   title={Positive solutions of $p$-th Yamabe type equations on infinite
   graphs},
   journal={Proc. Amer. Math. Soc.},
   volume={147},
   date={2019},
   number={4},
   pages={1421--1427},
}

\end{biblist}
\end{bibdiv}

\end{document}